\newcommand{\R}{\mathbb{R}}
\newcommand{\s}{\mathbb{S}}
\newtheorem{theorem}{Theorem}[section]
\newtheorem{proposition}{Proposition}[section]
\newtheorem{lemma}{Lemma}[section]
\newtheorem{corollary}{Corollary}[section]
\def\supp{\text{supp}}
\def\sign{\text{sign}\,}
\newcommand{\p}{\partial}
\newcommand{\bb}{\begin{equation}}
\newcommand{\ee}{\end{equation}}
\newcommand{\ba}{\begin{array}}
\newcommand{\ea}{\end{array}}
\newcommand{\f}{\frac}
\newcommand{\ds}{\displaystyle}
\newcommand{\al}{\alpha}
\numberwithin{equation}{section}
\title{The Cauchy problem and continuation of periodic solutions for a generalized Camassa-Holm equation}
\author[1]{Nilay Duruk Mutlubas}  \author[2]{Igor Leite Freire}
\affil[1]{Faculty of Engineering and Natural Sciences, Sabanci University, Turkey
\texttt{nilay.duruk@sabanciuniv.edu}}
\affil[2]{Centro de Matem\'atica, Computa\c{c}\~ao e Cogni\c c\~ao,
Universidade Federal do ABC, Avenida dos Estados, $5001$, Bairro Bangu,
$09.210-580$, Santo Andr\'e, SP - Brasil\\
  \texttt{igor.freire@ufabc.edu.br} \\
  \texttt{igor.leite.freire@gmail.com}}
\begin{document}
\maketitle
\begin{abstract}
\centering\begin{minipage}{\dimexpr\paperwidth-9cm}
\textbf{Abstract:} We consider a three-parameter family of non-linear equations with $(p+1)-$order non-linearities. Such family includes as a particular member the well-known $b-$equation, which encloses the famous Camassa-Holm equation. For certain choices of the parameters we establish a global existence result and show a scenario that prevent the wave breaking of solutions. Also, we explore unique continuation properties for some values of the parameters.

\vspace{0.1cm}
\textbf{2020 AMS Mathematics Classification numbers}: 35A01, 35B60.

\textbf{Keywords:} Camassa-Holm type equations, Global existence of solution, Continuation of solutions.

\end{minipage}
\end{abstract}
\bigskip
\newpage
\tableofcontents
\newpage

\section{Introduction}\label{sec1}

Our work is concerned with qualitative properties of periodic solutions of the equation
\bb\label{1.0.1}
u_t-u_{txx}+(b+c)u^pu_{x}=bu^{p-1}u_{x}u_{xx}+cu^pu_{xxx}.
\ee
Henceforth we assume that the parameters $b$, $c$ and $p$ are non-negative, positive real, and positive integer numbers,  respectively. 

In the case $p=c=1$ we have the $b-$equation \cite{dehoho}
\bb\label{1.0.2}
u_t-u_{txx}+(b+1)uu_{x}=bu_{x}u_{xx}+uu_{xxx},
\ee
whose relevance in hydrodynamics is discussed in \cite{dgh1,dgh2}, see also \cite{hs-siam,hs-pla} for a wide investigation about properties of these equations.
    
It is worth mentioning that the Degasperis-Procesi (DP) \cite{dp} and the Camassa-Holm (CH) \cite{chprl} equations can be recovered from the $b-$ equation when $b=2$ and $b=3$, respectively.
    
Whenever $b=(p+1)$ and $c=1$ we have the equation
$$
u_t-u_{txx}+(p+2)u^pu_{x}=(p+1)u^{p-1}u_{x}u_{xx}+u^pu_{xxx},
$$
which was obtained in \cite{silva-freire2015} using symmetries and conservation laws arguments, as well as in 
\cite{anco} where it can be inferred from a four-parameter family of equation as a member having the $H^1(\R)$ norm of the solutions (vanishing as $x\rightarrow\pm\infty$) conserved, peakons, and multi-peakon solutions. It can also be obtained from the family of equations studied in \cite{himonas-ade}. The two most famous members of this family is the already mentioned CH equation, for $p=1$, and the Novikov equation \cite{nov} when $p=2$. For further details, see the review \cite{igor-cm} where such equation is discussed in detail.

Our main motivation for this work is some open problems pointed out in \cite{himonas-jmp-2014}. We extend results on global existence of solutions established in this reference for the periodic problem associated to \eqref{1.0.1}, for certain parameters. Moreover, we study the prevention of blow up of periodic solutions of \eqref{1.0.1}, and establish unique continuation results.

Our paper is structured as follows: in Section \ref{sec2}, we present the notation and conventions used in the manuscript, as well as our main motivation, a revision of literature and our main contributions. Next, in Section \ref{sec3}, we prove some technical results very useful for establishing our achievements.

Global existence of solutions are proved in Section \ref{sec4}, whereas the prevention of their blow-up is described in Section \ref{sec5}. Such results, however, are for $b=0$ in \eqref{1.0.2}. We observe that the value $b=0$ in \eqref{1.0.2} brings some difficulty to the equations, as can be inferred from \cite{anco}, where very few conserved quantities can be found whenever $b=0$ in \eqref{1.0.1}, at least of low order, see also \cite{hs-siam,zhou}. Moreover, the results proved in \cite{hakkaev,himonas-jmp-2014,zhou} dealing with (global) existence of solutions usually assume that $b>0$. Part of the results reported in our work fills such gap (see theorems \ref{thm2.1} and \ref{thm2.2}), where we use a remarkable functional (whose square root is equivalent to the $H^3(\s)-$norm of the solutions of \eqref{1.0.2} with $b=0$). From this functional, we establish conditions for the existence of global solutions for the equation. 

Finally, unique continuation properties of \eqref{1.0.1} are presented in Section \ref{sec6}. Such results are dependent on the values of the parameters, but we do not need to fix a specific order on the non-linearities of the equation. 

Last but not least, our results are discussed in Section \ref{sec7} and our conclusions are reported in Section \ref{sec8}.

\section{Notation, conventions and main results}\label{sec2}

In this section, we fix the notations and conventions used throughout the manuscript, as well as our main contributions.

\subsection{Notation and conventions}

We identify the interval $[0,1)$ with the circle $\s$. The Sobolev space of order $s$ over the circle $\s$ and its corresponding norm are denoted by $H^s(\s)$ and $\|\cdot\|_{H^s(\s)}$, respectively. The functions $u$ considered in this work are usually dependent on the variables $t$ and $x$, that are regarded as time and space variables, respectively. Derivatives of $u$ with respect to its first argument will usually be denoted either by $u_t$ or $\p_t u$, while derivatives with respect to second argument will be denoted either by $u_x$ or $\p_x u$, respectively. For each $t\in[0,T)$, the quantity $m(t,x):=(1-\p^2_x)u(t,x)$ is sometimes referred as {\it momentum}. In particular, at $t=0$ we have $u(0,x)=u_0(x)$ and $m_0=u_0-u_0''$. Also, we assume that $u$ is at least $C^1$ with respect to its first argument.

By $\Lambda^{2}:H^s(\s)\rightarrow H^{s-2}(\s)$ we denote the  {\it Helmholtz operator} $1-\p_x^2$, whose inverse $\Lambda^{-2}$, acting on a function $f$, is given by $\Lambda^{-2}f=g\ast f$, where $\ast$ denotes the convolution between functions, and
\bb\label{g}
g(x)=\frac{\cosh(x-\lfloor x \rfloor - 1/2)}{2\sinh(1/2)}.
\ee

Note that $g(\cdot)$ is differentiable {\it a.e.} and, in particular, as soon as its derivative is defined, we have
\bb\label{2.1.2} 
\p_xg(x)=\frac{\sinh(x-\lfloor x \rfloor - 1/2)}{2\sinh(1/2)}.
\ee
Moreover, we have $\p_x\Lambda^{-2}(f)=(\p_x g)\ast f$.

\subsection{Motivation of our work and main results}

We recall some results appearing in the literature about \eqref{1.0.1} regarding the existence of its solutions. We observe that most of them deal with non-periodic problems and rely on the assumptions on the constants $b$, $c$, $p$ in (\ref{1.0.1}), initial data, and the order of Sobolev space. 

The Cauchy problem associated with (\ref{1.0.1}) was investigated in \cite{himonas-ade} in details. Using a Galerkin-type approximation scheme, local well-posedness in $H^s$, for $s>3/2$, has been proved both on the real line and the circle, i.e., given (\ref{1.0.1}) subject to $u(0,x)=u_0(x)\in H^s$, $s>3/2$, there exists a unique solution $u(t)\in C^0([0,T),H^s)\cap C^1([0,T),H^{s-1})$ which depends continuously on the initial data \cite[Theorem 1]{himonas-ade}, see also \cite[Theorem 1.1]{himonas-jmp-2014}. Moreover, it is proved that the solution map is not uniformly continuous neither on the line nor on the circle \cite[Theorem 2]{himonas-ade}. 

More recently, in \cite[Theorem 2.1]{yan}, local existence of solution for the non-periodic case, was proved in Besov spaces.

Global solutions for \eqref{1.0.1} subject to $u(0,x)=u_0(x)$ are granted provided that:
\begin{itemize}
    \item $p=1$ and $u_0(x)\in H^s(\mathbb{R})\cap L^1(\mathbb{R})$, $s> 3/2$, having the property that $m_0=u_0-(u_0)_{xx}$ does not change sign \cite[Theorem 3]{lai}, or
    \item $b\in [0,p/2]$, $u_0(x)\in H^s(\mathbb{R})$ with $s> 5/2$\cite[Theorem 1.5]{wei}, or
    \item $b=p/2$, $u_0(x)\in H^s(\mathbb{R})$ with $s> 3/2$\cite[Theorem 1.6]{wei}, or
    \item $b\in (0,p)$, $u_0(x)\in H^s(\mathbb{R})\cap W^{2,p/b}$ with $s> 3/2$\cite[Theorem 1.6]{wei}, or
    \item $b=p+1$, $u_0(x)\in H^s(\mathbb{R})$ with $s> 3/2$ having the property that $m_0=u_0-(u_0)_{xx}$ does not change sign \cite[Theorem 1.6]{wei}, or
    \item $b=p$ or $b\in\mathbb{R}$ and $p=1$ having the property that $m_0=u_0-(u_0)_{xx}$ does not change sign, $u_0(x)\in H^s(\mathbb{R})$ with $s> 3/2$  \cite[Theorem 1.6]{wei}, or
    \item $c=1$, $b=p+1$, $p$ odd, $u_0(x)\in H^s(\mathbb{R})$ with $s> 3/2$, and supposing the existence of $x_0\in\R$ such that $m_0(x)\leq x$ if $x\leq x_0$, and $m_0(x)\geq x$ if $x\geq x_0$, see \cite[Theorem 1.1]{chen}, or

    \item $p=1$, $c=1$, $b\geq1$, and $u_0(x)\in H^s(\s)$, $s\geq 3$, having the property that $m_0=u_0-(u_0)_{xx}$ does not change sign \cite[Theorem 4.1]{hakkaev}.
\end{itemize}

We observe that apart from the work by Christov and Hakkaev \cite{hakkaev}, the other papers listed above are all concerned with the non-periodic case.

It is worth mentioning that Himonas and Thompson \cite{himonas-jmp-2014} state that the existence of global solutions for \eqref{1.0.1} subject to $u(0,\cdot)=u_0(\cdot)\in H^s$, $s>3/2$, (both periodic and non-periodic cases), for $b\neq p+1$ is an open problem, see \cite[page 3]{himonas-jmp-2014}. This observation is one of the main motivations for our work. We shed light to this question through the following result concerning the case $b=0$ and $p=1$.
\begin{theorem}\label{thm2.1}
Let $u_0\in H^3(\s)$ and $u(t,.)$ be the unique local solution of 
\begin{equation} \label{2.2.1} 
\left\{
\begin{array}{l}
u_t-u_{txx}+cuu_{x}=cuu_{xxx},~~x\in\mathbb{R},~~t>0,\\
\\
u(0,x)=u_0(x), ~~x\in\mathbb{R},\\
\\
u(t,x)=u(t,x+1),~~x\in\mathbb{R},~~t>0.
\end{array}\right.
\end{equation}
If $m_0=u_0-u_0''\in H^1(\s)$ does not change sign, then the solution exists globally in \mbox{$C^0([0,\infty),H^3(\s))\cap C^1([0,\infty),H^2(\s))$}.
\end{theorem}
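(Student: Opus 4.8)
The plan is to exploit the momentum formulation of \eqref{2.2.1}. Setting $m=u-u_{xx}$ and using $u_x-u_{xxx}=\partial_x(u-u_{xx})=m_x$, one checks that the equation is equivalent to the pure transport equation
$$
m_t+cu\,m_x=0;
$$
this structural fact is what makes the case $b=0$, $p=1$ tractable. I would then introduce the flow $q(t,x)$ determined by $q_t=cu(t,q)$, $q(0,x)=x$. Since $u_0\in H^3(\s)\hookrightarrow C^2$, the local solution is Lipschitz in $x$, so $q(t,\cdot)$ is a well-defined diffeomorphism of $\s$ with
$$
q_x(t,x)=\exp\Big(\int_0^t c\,u_x(s,q(s,x))\,ds\Big)>0 ,
$$
and along this flow $\frac{d}{dt}m(t,q(t,x))=(m_t+cu\,m_x)(t,q)=0$, whence $m(t,q(t,x))=m_0(x)$.

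The first consequence is \emph{sign preservation}: as $q(t,\cdot)$ is onto, the hypothesis that $m_0$ does not change sign forces $m(t,\cdot)$ to keep that sign throughout the interval of existence. The decisive gain is an $L^1$ bound. With a fixed sign, $\|m(t)\|_{L^1(\s)}=\big|\int_\s m(t,x)\,dx\big|$, and since $\int_\s m\,dx=\int_\s u\,dx$ is conserved (integrate \eqref{2.2.1} over $\s$), we obtain $\|m(t)\|_{L^1(\s)}=\|m_0\|_{L^1(\s)}$ for all $t$. Writing $u=\Lambda^{-2}m=g\ast m$ and $u_x=(\partial_x g)\ast m$ with the kernel \eqref{g}--\eqref{2.1.2}, Young's inequality yields the time-uniform bounds $\|u(t)\|_{\infty}\le\|g\|_{\infty}\|m_0\|_{L^1(\s)}$ and $\|u_x(t)\|_{\infty}\le\|\partial_x g\|_{\infty}\|m_0\|_{L^1(\s)}=:M$.

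To close the estimate I would monitor the functional $E(t)=\|m(t)\|_{H^1(\s)}^2=\int_\s(m^2+m_x^2)\,dx$, which is equivalent to $\|u\|_{H^3(\s)}^2$ because $\Lambda^2$ is an isomorphism $H^3(\s)\to H^1(\s)$. Using $m_t=-cu\,m_x$ and integrating by parts on $\s$, the two contributions collapse to $\frac{d}{dt}\int_\s m^2=c\int_\s u_x m^2$ and $\frac{d}{dt}\int_\s m_x^2=-c\int_\s u_x m_x^2$, so that
$$
E'(t)=c\int_\s u_x\,(m^2-m_x^2)\,dx,\qquad |E'(t)|\le c\,\|u_x\|_{\infty}\,E(t)\le cM\,E(t).
$$
Grönwall's inequality then gives $E(t)\le E(0)\,e^{cMt}$, so $\|u(t)\|_{H^3(\s)}$ stays finite on every bounded time interval.

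It remains to upgrade this a priori bound to global existence, which is the step I expect to demand the most care. One invokes the continuation (blow-up) criterion from the local theory \cite{himonas-ade,himonas-jmp-2014}: a solution on a maximal interval $[0,T)$ with $T<\infty$ must satisfy $\limsup_{t\to T}\|u(t)\|_{H^3(\s)}=\infty$ (equivalently $\limsup_{t\to T}\|u_x(t)\|_{\infty}=\infty$). Since the bound above shows $E$, hence $\|u\|_{H^3(\s)}$, is controlled by $E(0)\,e^{cMT}$ on $[0,T)$, no such blow-up can occur and therefore $T=\infty$. The two technical points to treat explicitly are the justification of the transport identity $m(t,q(t,x))=m_0(x)$ when the momentum is only $H^1$ (so that $m_x\in L^2$ is not continuous) and the precise statement of the continuation criterion; both are routine for Camassa--Holm type equations.
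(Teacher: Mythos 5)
Your proposal is correct, and its skeleton --- the transport form $m_t+cu\,m_x=0$, sign preservation along the flow, conservation of $\int_\s m\,dx$ giving a time-uniform $L^1$ bound on $m$, a uniform bound on $\|u_x\|_{L^\infty(\s)}$, Gronwall on a functional equivalent to $\|u\|_{H^3(\s)}^2$, and the continuation criterion --- is exactly the paper's (Theorem \ref{thm3.1}, Corollary \ref{cor3.2}, Lemmas \ref{lem4.1} and \ref{lem4.2}). Where you genuinely differ is in how the two key estimates are executed. For the $u_x$ bound, the paper's Lemma \ref{lem4.1} avoids the Green's function: it uses $\int_\s u_{xx}\,dx=0$ to produce a point where $u_x$ vanishes and then integrates $m=u-u_{xx}$ over subintervals, exploiting the fixed signs of $m$ and $u$; you instead write $u_x=(\p_x g)\ast m$ and apply Young's inequality, $\|u_x\|_\infty\le\|\p_x g\|_\infty\|m_0\|_{L^1(\s)}$, which is shorter and makes the constant explicit. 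For the Gronwall step, the paper works at the level of $u$, assembling the functional $I(u)$ of \eqref{4.0.4} from the three energy identities \eqref{4.0.1}--\eqref{4.0.3}; you work at the level of $m$, where the transport structure collapses everything to the one-line identity $E'(t)=c\int_\s u_x\,(m^2-m_x^2)\,dx$ for $E=\|m\|_{H^1(\s)}^2$. Your version buys brevity and transparency; its cost is that differentiating the transport equation in $x$ produces $m_{xx}$, which for $u\in H^3(\s)$ exists only distributionally, so the identity for $\f{d}{dt}\int_\s m_x^2\,dx$ must be justified by mollification or by proving it for smoother data and passing to the limit via continuous dependence. This is the same caveat that affects the paper's identity \eqref{4.0.3}, which formally involves $u_{xxxx}$ and $u_{xxxt}$ inside exact $x$-derivatives; you flag the issue explicitly, which is appropriate, and with it handled both routes deliver the same exponential bound and the same conclusion.
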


Note that our Theorem \ref{thm2.1}, jointly with \cite[Theorem 2.4]{silva-freire2020} extend \cite[Theorem 1.4]{himonas-jmp-2014} to the equation \eqref{2.2.1}, with $u_0\in H^3(\s)$, for both periodic and non-periodic cases.

Our main result regarding blow-up of the solutions of \eqref{2.2.1} is:

\begin{theorem}\label{thm2.2}
Let $u_0\in H^3(\s)$ and $u\in C^0([0,\infty),H^3(\s))\cap C^1([0,\infty),H^2(\s))$ be the corresponding solution to \eqref{2.2.1}. If there exists a constant $M>0$ such that 
\begin{displaymath}
||u||_{L^\infty(\s)}+||u_x||_{L^\infty(\s)}<M,
\end{displaymath}
then $||u||_{H^3(\s)}$ does not blow up in finite time. 
\end{theorem}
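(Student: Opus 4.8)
The plan is to exploit the special structure that \eqref{2.2.1} acquires at the level of the momentum $m:=u-u_{xx}$. Since $u_t-u_{txx}=\p_t m$ and, because $\p_x m=u_x-u_{xxx}$, the right-hand side satisfies $cuu_{xxx}-cuu_x=-cu\,\p_x m$, the whole equation collapses to the pure transport equation
\[
m_t+cu\,m_x=0.
\]
This is the key observation, and it is convenient precisely because no zeroth-order term in $m$ survives. Moreover, as $m=\Lambda^2 u$, the norm $\|u\|_{H^3(\s)}$ is equivalent to $\|m\|_{H^1(\s)}$; hence it suffices to rule out the blow-up of $\|m\|_{H^1(\s)}^2=\|m\|_{L^2(\s)}^2+\|m_x\|_{L^2(\s)}^2$.

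Next I would carry out two energy estimates. Multiplying the transport equation by $m$, integrating over $\s$ and integrating by parts once gives
\[
\f{d}{dt}\|m\|_{L^2(\s)}^2=c\int_{\s} u_x\,m^2\,dx\le c\,\|u_x\|_{L^\infty(\s)}\,\|m\|_{L^2(\s)}^2 .
\]
Differentiating the transport equation in $x$, multiplying by $m_x$ and integrating by parts yields, at least formally,
\[
\f{d}{dt}\|m_x\|_{L^2(\s)}^2=-c\int_{\s} u_x\,m_x^2\,dx\le c\,\|u_x\|_{L^\infty(\s)}\,\|m_x\|_{L^2(\s)}^2 .
\]
Adding these and invoking the hypothesis $\|u\|_{L^\infty(\s)}+\|u_x\|_{L^\infty(\s)}<M$, I obtain a differential inequality of the form $\tfrac{d}{dt}\|m(t)\|_{H^1(\s)}^2\le C\,M\,\|m(t)\|_{H^1(\s)}^2$. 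Gronwall's inequality then gives $\|m(t)\|_{H^1(\s)}^2\le \|m_0\|_{H^1(\s)}^2\,e^{CMt}$, which is finite on every bounded interval; by the norm equivalence, $\|u(t)\|_{H^3(\s)}$ cannot blow up in finite time, which is the claim.

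The main obstacle is the rigorous justification of the second estimate. The local theory only supplies $m\in H^1(\s)$, so $m_{xx}$ (which appears once the transport equation is differentiated) is merely a distribution, and the integration by parts $\int_{\s} u\,m_x m_{xx}\,dx=-\tfrac12\int_{\s} u_x\,m_x^2\,dx$ is only formal. To make it precise I would regularize with a Friedrichs mollifier $J_\eps$, write the equation satisfied by $J_\eps m$ (which differs from the transport equation by a commutator $c\,[u,J_\eps]m_x$), estimate this commutator by the Friedrichs/Kato--Ponce lemma, run the estimate on $J_\eps m$ and let $\eps\to0$. It is in bounding these top-order commutator terms that both $\|u\|_{L^\infty(\s)}$ and $\|u_x\|_{L^\infty(\s)}$ genuinely enter, which explains why the hypothesis is phrased in terms of both quantities.

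Alternatively, one may avoid the momentum formulation altogether and estimate $\|u\|_{H^3(\s)}$ directly from the mild form
\[
u_t=c\,\Lambda^{-2}\p_x\Big(uu_{xx}-\tfrac12 u_x^2-\tfrac12 u^2\Big),
\]
by applying $\Lambda^3$, pairing with $\Lambda^3 u$ in $L^2(\s)$ and invoking the Kato--Ponce commutator estimate; the same two $L^\infty$ norms then control the resulting right-hand side, and Gronwall again closes the argument.
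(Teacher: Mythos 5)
Your proof is correct, but it takes a genuinely different route from the paper. The paper never touches the momentum in its blow-up argument: it multiplies \eqref{2.2.1} by $u$, its first $x$-derivative by $u_x$, and its second $x$-derivative by $u_{xx}$, producing the three identities \eqref{4.0.1}--\eqref{4.0.3}, and sums them into the functional $I(u)$ of \eqref{4.0.4}, which is equivalent to $\|u\|_{H^3(\s)}^2$; the top-order terms cancel so that $\f{d}{dt}I(u)=-2c\int_\s u_xu_{xx}^2\,dx-\f{c}{2}\int_\s u_xu_{xxx}^2\,dx\leq cA\,\|u_x\|_{L^\infty(\s)}I(u)$, and Gronwall closes the argument (Lemma \ref{lem4.2}); Theorem \ref{thm2.2} then follows because its hypothesis supplies exactly the $L^\infty$ bound on $u_x$ that Lemma \ref{lem4.1} would otherwise provide. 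You instead work at the level of $m=\Lambda^2u$, observing that for $b=0$, $p=1$ the momentum equation \eqref{3.0.1} degenerates to the pure transport equation $m_t+cu\,m_x=0$, and run $L^2$ and $H^1$ energy estimates on $m$ together with the equivalence $\|m\|_{H^1(\s)}\sim\|u\|_{H^3(\s)}$. The two routes are cousins (both are Gronwall arguments driven solely by $\|u_x\|_{L^\infty(\s)}$ --- in both, the $\|u\|_{L^\infty(\s)}$ part of the hypothesis is never actually used), but yours is structurally cleaner: the absence of a zeroth-order term in $m$ makes the cancellations automatic, whereas the paper must engineer them by hand through the weighted combination in \eqref{4.0.4}. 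What the paper's route buys is that it stays entirely at the level of $u$ and needs no commutator machinery; note, however, that its identities \eqref{4.0.2}--\eqref{4.0.3} involve $u_{xxxx}$ and $u_{xxxt}$ inside exact-derivative terms, so they are exactly as formal at $H^3$ regularity as your integration by parts $\int_\s u\,m_xm_{xx}\,dx=-\tfrac12\int_\s u_x m_x^2\,dx$ --- a gap the paper passes over silently, while you explicitly flag it and sketch the standard mollifier/commutator repair. One small correction to that sketch: the Friedrichs commutator bound for $[u,J_\eps]\p_x$ requires only the Lipschitz norm $\|u_x\|_{L^\infty(\s)}$, not $\|u\|_{L^\infty(\s)}$ as well, which is consistent with the observation that the full strength of the hypothesis is not needed in either proof.
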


In \cite[Theorem 1.2]{himonas-jmp-2014} it was shown that if the initial data and its derivative are $o(e^{-x})$ and $O(e^{-\al x})$, respectively,  as $x\rightarrow\infty$, for a suitable $\al\in(0,1)$, and if there exists a $t_1\in(0,T)$ such that the corresponding non-periodic solution satisfies
$$
\lim_{x\rightarrow\infty}\f{|u(t_1,x)|}{e^{x}}=0,
$$
then the solution $u$ vanishes identically (see \cite[Theorem 1.2]{himonas-jmp-2014} for the restriction on the parameters). Such unique continuation result cannot be proved for \eqref{1.0.1} using the same ideas in \cite{himonas-jmp-2014} for the periodic case:  their proof makes use of the behaviour of the solutions for $|x|\gg 1$. This does not imply the impossibility of establishing an analogous result for \eqref{1.0.1}. Actually, our next two results can be seen as the periodic counter-parts of \cite[Theorem 1.2]{himonas-jmp-2014}.

\begin{theorem}\label{thm2.3}
Assume that $u\in C^0([0,T),H^s(\R))$, $s>3/2$, is a solution of \eqref{1.0.1}. Suppose that
\begin{itemize}
    \item $p=1$, $b\in[0,3c]$, and $s>3/2$; or
    \item $b=pc$, $p$ is odd, and $s>3/2$; or
    \item $b=pc$, $p$ is even, $m_0$ is either non-negative or non-positive, and $s=3$.
\end{itemize}
If there exists $t^\ast\in(0,T)$, $a,b\in\s$, with $a<b$, such that $u(t^\ast,\cdot)\big|_{[a,b]}\equiv0$ and $u_t(t^\ast,a)=u_t(t^\ast,b)=0$, then $u\equiv0$.
\end{theorem}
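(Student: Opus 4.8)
The plan is to recast \eqref{1.0.1} as a non-local evolution equation with a sign-definite ``potential'' and then to exploit the strict positivity of the kernel $g$. Starting from \eqref{1.0.1} and adding the convective term $cu^pu_x$, a direct computation using $\p_x^2(u^pu_x)=p(p-1)u^{p-2}u_x^3+3pu^{p-1}u_xu_{xx}+u^pu_{xxx}$ yields
\[
\Lambda^{2}\bigl(u_t+cu^pu_x\bigr)=-\frac{b}{p+1}\p_x(u^{p+1})+\frac{b-3cp}{2}\,\p_x(u^{p-1}u_x^2)-\frac{(p-1)(b-cp)}{2}\,u^{p-2}u_x^3 .
\]
The last term is the only obstruction to writing the right-hand side as a single $x$-derivative, and it vanishes exactly when $p=1$ or $b=cp$ --- precisely the standing hypotheses. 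In those cases one obtains
\[
u_t+cu^pu_x=-\p_x\Lambda^{-2}P,\qquad P:=\frac{b}{p+1}\,u^{p+1}+\frac{3cp-b}{2}\,u^{p-1}u_x^2 ,
\]
where $b$ denotes the equation parameter. The first observation to record is that $P$ is sign-definite in each case: for $p=1$ the condition $b\in[0,3c]$ makes both coefficients non-negative; for $b=cp$ with $p$ odd one has $u^{p+1}\ge0$ and $u^{p-1}\ge0$; and for $b=cp$ with $p$ even, $P$ carries the (constant) sign of $u$.

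Next I would extract a pointwise condition from the hypothesis. Fix the interval $[a,b]$ and the time $t^\ast$. On $(a,b)$ we have $u(t^\ast,\cdot)\equiv0$, hence $u_x=u_{xx}=u_{xxx}=0$ there, so every term on the right-hand side of \eqref{1.0.1} vanishes and the equation collapses to $u_t-u_{txx}=0$. Thus $w:=u_t(t^\ast,\cdot)$ satisfies $w''=w$ on $(a,b)$; a bootstrap on $(a,b)$ (using $s>3/2$ and $u\in C^1$ in $t$) makes $w$ classical there and continuous up to the endpoints. Since $w''=w$ has no nontrivial solution vanishing at two distinct points, the boundary data $w(a)=w(b)=0$ force $u_t(t^\ast,\cdot)\equiv0$ on $[a,b]$.

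The heart of the argument is then the positivity step. By continuity $u=u_x=u_t=0$ on all of $[a,b]$ at $t^\ast$, so $u^pu_x=0$ and $P=0$ there. Evaluating the non-local identity at $x\in[a,b]$ and setting $J:=\s\setminus[a,b]$ gives
\[
0=\int_J \p_xg(x-y)\,P(t^\ast,y)\,dy,\qquad x\in[a,b].
\]
Because this holds on the open interval, I differentiate in $x$; using $\p_x^2g=g-\delta$ together with $x\neq y$ for $x\in(a,b)$, $y\in J$, the delta contributes nothing and
\[
0=\int_J g(x-y)\,P(t^\ast,y)\,dy,\qquad x\in(a,b).
\]
Since $g>0$ and $P(t^\ast,\cdot)$ is of one sign on $J$, the integrand is of one sign, whence $P(t^\ast,\cdot)\equiv0$ on $J$. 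When the coefficient $b/(p+1)$ is positive this immediately gives $u(t^\ast,\cdot)\equiv0$ on $J$; in the single remaining case $p=1,\ b=0$ one gets $u_x\equiv0$ on $J$ and concludes by continuity with the vanishing endpoint values. Either way $u(t^\ast,\cdot)\equiv0$ on all of $\s$.

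Finally, with zero Cauchy data at $t=t^\ast$, forward and backward uniqueness for \eqref{1.0.1} (from the local well-posedness quoted above, the trivial solution being the unique one) yields $u\equiv0$ on $[0,T)$. I expect the main obstacle to be the reformulation step together with the sign analysis of $P$: the cancellation of the $u^{p-2}u_x^3$ term is exactly what singles out the admissible parameters, and the sign-definiteness of $P$ is precisely what the three hypotheses encode. In the delicate even case $b=cp$, the constancy of $\sign u$ is not automatic and must be proved: writing the momentum equation $m_t+c\p_x(u^pm)=0$ and using the transport identity $\tfrac{d}{dt}\bigl(m(t,q)\,\p_xq\bigr)=0$ along the flow $\dot q=cu^p(t,q)$ shows $\sign m(t,\cdot)=\sign m_0$, so that $u=g\ast m$ inherits a fixed sign from $g>0$; this is where the requirements $s=3$ and the sign condition on $m_0$ enter.
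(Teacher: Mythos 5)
Your proposal is correct, and its core machinery is the same as the paper's: the nonlocal reformulation (your $P$ is exactly the paper's $f_t$ in \eqref{6.0.1}, and your identity is \eqref{6.0.5}), the sign-definiteness of $P$ under the three parameter hypotheses (the paper's Proposition \ref{prop6.1}, with the momentum-sign/transport argument of Theorem \ref{thm3.1} entering for even $p$ exactly where you say it must), and the strict positivity of the kernel $g$. You diverge genuinely in two places. First, in how the boundary data are used: the paper never proves $u_t(t^\ast,\cdot)\equiv0$ on all of $[a,b]$; it sets $F_{t^\ast}=\p_x\Lambda^{-2}f_{t^\ast}=-(u_t+cu^pu_x)$, observes that the hypotheses give only $F_{t^\ast}(a)=F_{t^\ast}(b)=0$, and applies the Fundamental Theorem of Calculus to $F_{t^\ast}'=\Lambda^{-2}f_{t^\ast}-f_{t^\ast}=\Lambda^{-2}f_{t^\ast}$ on $[a,b]$, obtaining $\int_a^b\Lambda^{-2}f_{t^\ast}\,dx=0$ in one stroke. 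Your route --- the ODE $w''=w$ with $w(a)=w(b)=0$ forcing $u_t(t^\ast,\cdot)\equiv0$ on the interval, followed by differentiation under the integral sign to trade the kernel $\p_xg$ for the positive kernel $g$ --- is sound (the regularity needed for $w$ to be classical on $(a,b)$, and the legitimacy of differentiating, both follow from the nonlocal form and the smoothness of $g$ off the diagonal), but it does strictly more work to arrive at the same sign-definite integral identity. Second, in how the vanishing at $t^\ast$ is propagated in time: the paper uses conservation of the $L^1(\s)$ or $H^1(\s)$ norm (Corollary \ref{cor6.1}), whereas you invoke forward and backward uniqueness.

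That last step is the one under-justified point in your write-up. Backward uniqueness does not follow from forward local well-posedness alone; it follows from the invariance of \eqref{1.0.1} under $(t,x)\mapsto(-t,-x)$, which lets you apply forward uniqueness to $v(t,x):=u(t^\ast-t,-x)$, a solution with zero data at $t=0$. Once that one line is added, your propagation step is arguably more robust than the conservation route: it works uniformly in all three parameter cases, without needing a conserved norm at hand (note that the $L^1$ conservation of Theorem \ref{thm3.2} itself presupposes a sign-definite solution, and $H^1$ conservation requires $b=(p+1)c$), so this is a point where your argument buys something the paper's does not.
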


In line with the above result, we have the following unique continuation property.

\begin{theorem}\label{thm2.4}
Assume that $u\in C^0([0,T),H^s(\R))$, $s>3/2$, is a non-negative or non-positive solution of \eqref{1.0.1}, and suppose that $p=1$ and $b\in[0,3c]$, or $b=pc$. If there exists a piece of cylinder ${\cal S}=(t_0,t_1)\times (x_0,x_1)\subseteq[0,T)\times\s$ such that $u\big|_{\cal S}\equiv0$, then $u\equiv0$.
\end{theorem}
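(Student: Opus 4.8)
The plan is to rewrite \eqref{1.0.1} in its nonlocal (convolution) form, exploit the hypothesis that $u$ is a \emph{signed} solution (say $u\ge 0$), and convert the local vanishing on the cylinder $\mathcal{S}$ into global vanishing by an integral-representation argument. First I would recall that, upon applying $\Lambda^{-2}$ to \eqref{1.0.1}, the equation becomes
\begin{equation}\label{plan-nonlocal}
u_t + c u^p u_x = -\p_x\Lambda^{-2}\!\left(\frac{b+c-c(p+1)}{p+1}u^{p+1} + \text{(lower order in $u_x$)}\right) + \Lambda^{-2}\big(\cdots\big),
\end{equation}
where the precise right-hand side follows the technical identities of Section~\ref{sec3}. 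In the two admissible parameter regimes ($p=1$, $b\in[0,3c]$, or $b=pc$) the right-hand side should collapse, after using $\Lambda^{-2}=g\ast(\cdot)$ and $\p_x\Lambda^{-2}=(\p_x g)\ast(\cdot)$, into an expression of the form $g\ast F[u] + (\p_x g)\ast G[u]$ with $F,G$ having a definite sign whenever $u$ has a definite sign. This sign structure is the crux, and it is exactly why the hypothesis requires $u$ to be non-negative or non-positive.

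Next I would use the openness of $\mathcal{S}=(t_0,t_1)\times(x_0,x_1)$ together with Theorem~\ref{thm2.3} as the bridge. The idea is that the rectangle $\mathcal S$ supplies, for each fixed $t^\ast\in(t_0,t_1)$, an interval $[a,b]\subset(x_0,x_1)$ on which $u(t^\ast,\cdot)\equiv0$; since $u$ vanishes on all of $\mathcal S$, its time derivative also vanishes there, so in particular $u_t(t^\ast,a)=u_t(t^\ast,b)=0$. These are precisely the hypotheses of Theorem~\ref{thm2.3}, whose conclusion is $u\equiv0$. Thus the real work is to verify that the parameter conditions in Theorem~\ref{thm2.4} (namely $p=1$, $b\in[0,3c]$, or $b=pc$) are subsumed by those of Theorem~\ref{thm2.3}, checking the parity subcases: when $p$ is even and $b=pc$, the sign hypothesis on $m_0$ demanded by Theorem~\ref{thm2.3} must be recovered from the sign hypothesis on $u$. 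Here I would invoke the relation $m=u-u_{xx}$ and the fact that $m=\Lambda^2 u$, so that a signed $u$ on the circle forces a corresponding control on $m_0$; alternatively, one argues that the higher-regularity case $s=3$ of Theorem~\ref{thm2.3} is available because $u$ signed and vanishing on an open set propagates enough regularity through the momentum equation.

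I expect the main obstacle to be the parity bookkeeping for $b=pc$ with $p$ even. In the odd case the term $u^p$ preserves the sign of $u$, so the definite-sign structure in \eqref{plan-nonlocal} is automatic; but in the even case $u^p\ge0$ regardless of the sign of $u$, and one must be careful that the quantity whose sign is being tracked (whether it is $u$, $u^{p}u_x$, or the momentum $m$) is the right one to feed into Theorem~\ref{thm2.3}. The cleanest route is probably to show directly that the stated hypotheses of Theorem~\ref{thm2.4} imply, in every listed case, the hypotheses of Theorem~\ref{thm2.3} with $t^\ast$ any interior time of $(t_0,t_1)$ and $(a,b)=(x_0,x_1)$, reducing Theorem~\ref{thm2.4} to a corollary of Theorem~\ref{thm2.3}. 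If that reduction goes through cleanly, the proof is short; the delicate point is confirming the sign/regularity compatibility in the even-$p$ branch, which is where I would concentrate the detailed verification.
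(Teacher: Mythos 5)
Your overall skeleton is the same as the paper's first (``conserved quantity'') proof: pick $t^\ast\in(t_0,t_1)$ and a closed interval $[a,b]\subset(x_0,x_1)$, observe that $u\equiv0$ on the open cylinder forces $u_t(t^\ast,a)=u_t(t^\ast,b)=0$, and feed this into the unique-continuation machinery of Section \ref{sec6}. For the branches $p=1$, $b\in[0,3c]$ and $b=pc$ with $p$ odd, your reduction to Theorem \ref{thm2.3} is clean and correct, since there the hypotheses of Theorem \ref{thm2.3} are genuinely subsumed by those of Theorem \ref{thm2.4}.

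The gap is in the branch $b=pc$ with $p$ even, and it is exactly where you suspected, but both of your proposed repairs fail. Theorem \ref{thm2.3} in that branch requires $m_0$ of one sign \emph{and} $s=3$, and neither follows from the hypotheses of Theorem \ref{thm2.4}. Your first repair --- that a signed $u$ ``forces a corresponding control on $m_0$'' via $m=\Lambda^2u$ --- is false: positivity travels only in the opposite direction, because $u=g\ast m$ with $g>0$ gives $m\geq0\Rightarrow u\geq0$, while for instance $u(x)=1+\tfrac12\sin(2\pi x)>0$ has $m=u-u''=1+(\tfrac12+2\pi^2)\sin(2\pi x)$, which changes sign. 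Your second repair (that vanishing on an open set ``propagates enough regularity'' to reach $s=3$) is not a real mechanism; solutions do not gain Sobolev regularity this way. The paper avoids the issue by never routing this case through the sign of $m_0$: it invokes Proposition \ref{prop6.2} (the engine behind Theorem \ref{thm2.3}) directly, and the only property that proposition's argument actually uses is that $f_{t^\ast}$ in \eqref{6.0.1} has a definite sign. With $b=pc$ one has $f_t=pc\,u^{p-1}u_x^2+\tfrac{pc}{p+1}u^{p+1}$, and for $p$ even the exponents $p-1$ and $p+1$ are odd, so $f_{t^\ast}$ inherits the sign that Theorem \ref{thm2.4} \emph{hypothesizes} for $u$ --- no condition on $m_0$ and no $s=3$ are needed. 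Then \eqref{6.0.6} gives $F_{t^\ast}(a)=F_{t^\ast}(b)=0$, identity \eqref{6.0.4} forces $\Lambda^{-2}f_{t^\ast}\equiv0$, hence $u(t^\ast,\cdot)=0$, and conservation of the $L^1(\s)$ norm (Theorem \ref{thm3.2}, applicable precisely because $u$ is signed) yields $u\equiv0$. You also do not mention the paper's second, genuinely different proof: once $u(t^\ast,\cdot)=0$ is known, translation invariance plus continuity of the data-to-solution map concludes immediately, with no conserved quantity at all.
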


\section{Preliminaries}\label{sec3}

In this section we present some technical and necessary results needed for the proof of our main theorems. It will be very useful to rewrite \eqref{1.0.1} in terms of its momentum $m=u-u_{xx}$, that is,
\bb\label{3.0.1}
m_t+cu^pm_x+bu^{p-1}u_xm=0.
\ee

Note that in view of the relation $m(t,x)=\Lambda^2 u(t,x)$, the solution of \eqref{1.0.1} subject to $u(0,x)=u_0$ is equivalent to \eqref{3.0.1} subject to $m(0,x)=m_0(x)$.

\begin{theorem}\label{thm3.1}
Let $u_0\in H^3(\s)$, $u_0\not\equiv0$, and $u\in C^0([0,T),H^3(\s))\cap C^1([0,T),H^2(\s))$, be the corresponding solution to \eqref{1.0.1} subject to $u(0,x)=u_0(x)$. If $m_0(x)\geq0$ or $m_0(x)\leq0$, $x\in\R$, then the corresponding momentum $m(t,x)$ satisfies $m(t,x)\geq0$ or $m(t,x)\leq0$, respectively.
\end{theorem}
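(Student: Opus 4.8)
The plan is to prove Theorem~\ref{thm3.1} by the method of characteristics applied to the momentum equation \eqref{3.0.1}. Since \eqref{3.0.1} is a first-order transport equation for $m$, namely $m_t+cu^p m_x+bu^{p-1}u_x m=0$, the idea is to freeze the transport velocity along the flow and observe that $m$ then satisfies a linear ODE whose solution is a multiplicative (exponential) factor times its initial value. Because the exponential is strictly positive, the sign of $m$ is preserved along each characteristic, and hence throughout the domain.

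Concretely, I would first introduce the characteristic curves $q(t,\xi)$ defined by the ODE system
\begin{equation}\label{plan-char}
\frac{d}{dt}q(t,\xi)=cu^p\bigl(t,q(t,\xi)\bigr),\qquad q(0,\xi)=\xi.
\end{equation}
Since $u\in C^0([0,T),H^3(\s))\cap C^1([0,T),H^2(\s))$ and $H^3(\s)\hookrightarrow C^2(\s)$, the map $(t,x)\mapsto cu^p(t,x)$ is continuous in $t$ and Lipschitz in $x$ (indeed $C^1$ in $x$ on the compact circle), so by the standard Picard--Lindel\"of theory the flow $q(t,\cdot)$ exists, is unique, and is a diffeomorphism of $\s$ for each fixed $t\in[0,T)$. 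The next step is to differentiate $q_x(t,\xi):=\p_\xi q(t,\xi)$ in $t$ to show it satisfies a linear ODE with $q_x(0,\xi)=1$, from which $q_x(t,\xi)=\exp\!\bigl(\int_0^t c\,p\,u^{p-1}u_x(s,q(s,\xi))\,ds\bigr)>0$; this confirms $q(t,\cdot)$ is an increasing diffeomorphism and that the characteristics do not cross.

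The heart of the argument is then to compute how $m$ evolves along a characteristic. Setting $\mu(t,\xi):=m(t,q(t,\xi))$ and differentiating via the chain rule, the left-hand side of \eqref{3.0.1} becomes exactly $\frac{d}{dt}\mu$, so that
\begin{equation}\label{plan-mu}
\frac{d}{dt}\mu(t,\xi)=-b\,u^{p-1}u_x\bigl(t,q(t,\xi)\bigr)\,\mu(t,\xi).
\end{equation}
This is a linear homogeneous ODE in $\mu$ for each fixed $\xi$, whose solution is
\begin{equation}\label{plan-sol}
\mu(t,\xi)=m_0(\xi)\,\exp\!\Bigl(-b\!\int_0^t u^{p-1}u_x\bigl(s,q(s,\xi)\bigr)\,ds\Bigr).
\end{equation}
Because the exponential factor is strictly positive and $q(t,\cdot)$ is a bijection of $\s$, the sign of $m(t,x)$ at any point equals the sign of $m_0$ at the corresponding pre-image $\xi=q^{-1}(t,x)$. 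Hence $m_0\geq0$ everywhere forces $m(t,x)\geq0$ everywhere, and likewise for $m_0\leq0$, which is the claim.

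I anticipate the main technical obstacle to be the regularity bookkeeping that justifies the classical method of characteristics at the available level of smoothness. Specifically, I must verify that $u^p$ is $C^1$ in $x$ (uniformly in $t$ on compact time subintervals) so that \eqref{plan-char} has a well-defined global-in-$x$ flow, and that $u^{p-1}u_x$ is continuous along the characteristics so that the integral in \eqref{plan-sol} makes sense and the ODE \eqref{plan-mu} is genuinely satisfied in the classical sense. Since $u(t,\cdot)\in H^3(\s)\subset C^2(\s)$, we have $u_x\in C^1$ and $u_{xx}\in C^0$, which is comfortably enough to close these estimates; the only care needed is to keep the coefficient functions bounded on $[0,t]\times\s$, which follows from continuity in $t$ and compactness of $\s$. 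With these points established, the positivity of the exponential integrating factor immediately yields the sign-preservation conclusion.
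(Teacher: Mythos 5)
Your proposal is correct and follows essentially the same route as the paper: both define the flow $\gamma_t=cu^p(t,\gamma)$, deduce $\gamma_x=\exp\bigl(\int_0^t pc\,u^{p-1}u_x\,d\tau\bigr)>0$, and track $m$ along characteristics. Your explicit integrating-factor solution $\mu(t,\xi)=m_0(\xi)\exp\bigl(-b\int_0^t u^{p-1}u_x\,ds\bigr)$ is exactly the unwound form of the paper's conserved quantity $m(t,\gamma)\,\gamma_x^{b/(pc)}=m_0(x)$, so the two arguments coincide.
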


\begin{proof}Our demonstration follows the same steps as \cite[Theorem 3.1]{const} and we follow it closely, see also \cite[Theorem 4.1]{hakkaev}. Let us fix $x$ and define $\gamma(\cdot,x)$ by
$$
\left\{
\ba{lcl}
\gamma_t(t,x)&=&c u(t,\gamma)^p,\quad t\in(0,T),\\
\\
\gamma(0,x)&=&x.
\ea
\right.
$$

Fixing $t$, letting $x$ varies, and differentiating the PVI above with respect to $x$, we obtain
\bb\label{3.0.2}
\left\{
\ba{lcl}
\gamma_{tx}(t,x)&=&p c (u^{p-1}u_x\gamma_x)(t,\gamma),\quad t\in(0,T),\\
\\
\gamma_x(0,x)&=&1.
\ea
\right.
\ee

From \eqref{3.0.2} we obtain
\bb\label{3.0.3}
\gamma_x(t,x)=e^{\int_0^t p c (u^{p-1}u_x)(\tau,\gamma(\tau,x))d\tau}>0.
\ee
Proceeding similarly as \cite[Theorem 3.1]{const} we conclude that $\gamma(t,\cdot):\R\rightarrow\R$ is an increasing diffeomorphism for every $t\in[0,T)$. Moreover, since $\gamma_t=cu^p$ and $\gamma_{tx}=p c (u^{p-1}u_x\gamma_x)$, we have
$$
\ba{lcl}
\ds{\f{d}{dt}\Big(m(t,\gamma)\gamma_x(t,y)^{\f{b}{pc}}\Big)}&=&\ds{\Big[\Big(m_t+m_x\gamma_t\Big)\gamma_x^{\f{b}{pc}}+\f{b}{pc}m\gamma_x^{\f{b}{pc}-1}\Big](t,\gamma)}\\
\\
&=&\ds{\Big[\Big(m_t+\gamma_tm_x+bu^{p-1}u_xm \Big)\gamma_x^{\f{b}{pc}}\Big](t,\gamma)}\\
\\
&=&\ds{\Big[\Big(m_t+cu^pm_x+bu^{p-1}u_xm\Big)\gamma_x^{\f{b}{pc}}\Big](t,\gamma)=0}.
\ea
$$

The differential equation above implies that $m(t,\gamma(t,x))\gamma_x^{\f{b}{pc}}=m_0(x)$. Therefore, if $m_0(x)\geq0$ or $m_0(x)\leq0$, then $m(t,y(t,x))\geq0$ or $m(t,y(t,x))\leq0$, respectively, since \eqref{3.0.3} holds.
\end{proof}

\begin{corollary}\label{cor3.1}
Let $0\not\equiv u_0\in H^3(\s)$ be a non-trivial function and $u\in C^0([0,T),H^3(\s))\cap C^1([0,T),H^{2}(\s))$, be the corresponding solution of \eqref{1.0.1} subject to $u(0,x)=u_0(x)$. If $m_0(x)\geq0$ or $m_0(x)\leq0$, for every $x\in\R$, then $u(t,x)\geq0$ or $u(t,x)\leq0$, respectively. Moreover, if $m_0(x)$ does not change sign, then $\sign{(u)}=\sign{(u_0)}=\sign{(m_0)}=\sign{(m)}$. 
\end{corollary}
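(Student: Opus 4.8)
The plan is to reduce the statement to the sign-propagation already proved in Theorem \ref{thm3.1} together with the positivity of the kernel $g$ in \eqref{g}. Theorem \ref{thm3.1} tells us that if $m_0$ keeps a sign then so does $m(t,\cdot)$ for every $t\in[0,T)$; what remains is to transfer the sign of the momentum to the sign of $u$ itself, using the recovery formula $u=\Lambda^{-2}m=g\ast m$, i.e.
\bb
u(t,x)=\int_{\s}g(x-y)\,m(t,y)\,dy.
\ee

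First I would check that $g(x)>0$ for all $x$. This is immediate from \eqref{g}: the numerator $\cosh(x-\lfloor x\rfloor-1/2)$ is everywhere positive and $\sinh(1/2)>0$, so $g$ is a strictly positive, $1$-periodic function. Consequently, if $m(t,\cdot)\geq0$ then the integral above is nonnegative, giving $u(t,\cdot)\geq0$; and if $m(t,\cdot)\leq0$ then $u(t,\cdot)\leq0$. Combined with Theorem \ref{thm3.1}, this already proves the first assertion.

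For the refinement I would argue that the signs are in fact strict and nonzero. Since $u_0\not\equiv0$ and $\Lambda^{-2}$ is injective, $m_0\not\equiv0$; moreover the identity $m(t,\gamma(t,x))\gamma_x(t,x)^{\f{b}{pc}}=m_0(x)$ from the proof of Theorem \ref{thm3.1}, together with $\gamma_x>0$ and the fact that $\gamma(t,\cdot)$ is a diffeomorphism, shows $m(t,\cdot)\not\equiv0$ for each $t$. Because $g$ is strictly positive on all of $\s$, a nonnegative (resp. nonpositive) $m(t,\cdot)$ that is not identically zero yields a strictly positive (resp. negative) integral, so $\sign(u)=\sign(m)$; applying the same reasoning at $t=0$ gives $\sign(u_0)=\sign(m_0)$, and Theorem \ref{thm3.1} identifies $\sign(m)=\sign(m_0)$. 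Chaining these equalities yields $\sign(u)=\sign(u_0)=\sign(m_0)=\sign(m)$.

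The only genuinely substantive point, and hence the part I would be most careful with, is the passage from nonstrict to strict positivity: one must ensure $m(t,\cdot)\not\equiv0$ (which is where the nontriviality hypothesis $u_0\not\equiv0$ and the conservation identity enter) so that convolving with the strictly positive kernel $g$ cannot produce a vanishing value. Everything else is a direct consequence of the positivity of $g$ and of the already-established invariance of the sign of $m$.
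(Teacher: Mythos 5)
Your proof is correct and follows essentially the same route as the paper's: Theorem \ref{thm3.1} propagates the sign of the momentum, and the representation $u(t,x)=(g\ast m)(t,x)$ together with the strict positivity of the kernel $g$ in \eqref{g} transfers that sign to $u$. The extra step you include---verifying $m(t,\cdot)\not\equiv0$ via the identity $m(t,\gamma(t,x))\gamma_x^{b/(pc)}=m_0(x)$ so that the sign equalities in the ``moreover'' clause are genuine---is left implicit in the paper but is handled correctly.
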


\begin{proof}

Under the conditions above, Theorem \ref{thm3.1} implies that $x\mapsto m(t,x)$ is non-negative or non-positive, for all $t\in[0,T)$. Since
$$
u(t,x)=g\ast m(t,x)=\int_\s g(x-y)m(t,y)dy,
$$
and $g$, given by \eqref{g}, is strictly positive, then the same will apply to $u$, respectively.
\end{proof}

Equation \eqref{1.0.1} has several conserved quantities depending on the values of the parameters $b$, $c$ and $p$, see \cite[Theorem 2.1]{anco}. Among them, two are very important for our purposes, namely,
\bb\label{3.0.4}
{\cal H}_1(t)=\int_\s u(t,x)dx,
\ee
as long as $p=1$ or $b=pc$, and
\bb\label{3.0.5}
{\cal H}_2(t)=\f{1}{2}\int_\s (u^2+u_x^2)dx,
\ee
in case $b=(p+1)c$.

If $u$ is a solution of \eqref{1.0.1}, then from \cite[Theorem 2.1]{anco} we can obtain the relation
$$
\p_t(u-u_{xx})=-\p_x\Big(\f{c}{p+1}u^{p+1}+\f{pc-b}{2}u_x^2-cu^pu_{xx}\Big),
$$
provided that $p=1$ or $b=pc$. Therefore 
$$
\f{d}{dt}\int_\s m dx=\int_\s m_t dx= -\int_\s \p_x\Big(\f{c}{p+1}u^{p+1}+\f{pc-b}{2}u_x^2-cu^pu_{xx}\Big)dx=0,
$$
which implies the conservation of the momentum, that is, 
\begin{equation}\label{3.0.6}
\int_\s m(x) dx=\int_\s m_0(x)dx.
\end{equation}

The main relevance of theorem \ref{thm3.1}, or its corollary, is that it assures the existence of non-negative/non-positive solutions with certain regularity. From this and the conserved quantities we have the following result.

\begin{theorem}\label{thm3.2}
Let $u_0\in H^3(\s)$, $p=1$ or $b=pc$, and assume that $u$ is either non-negative or non-positive. Then the $L^1(\s)-$norm of the corresponding solution $u\in C^0([0,T),H^3(\R))\cap C^1([0,T),H^{2}(\s))$ of \eqref{1.0.1} subject to $u(0,x)=u_0(x)$ is conserved.  In particular, if $m_0(x)$ is either non-positive or non-negative for every $x\in\R$, then the corresponding solution conserves the $L^1(\s)-$norm.
\end{theorem}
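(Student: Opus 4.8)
The plan is to recognize that for a function of a single sign, the $L^1(\s)$-norm coincides, up to an overall sign, with the already-conserved functional ${\cal H}_1$, so that the entire content reduces to combining two facts established earlier: the conservation of ${\cal H}_1(t)=\int_\s u(t,x)\,dx$ under the hypothesis $p=1$ or $b=pc$, and the sign-preservation furnished by Corollary \ref{cor3.1}. It is worth noting first that on the circle $\int_\s u_{xx}\,dx=0$, so that $\int_\s m\,dx=\int_\s u\,dx={\cal H}_1$; hence the conservation of momentum \eqref{3.0.6} is precisely the time-independence of ${\cal H}_1(t)$.

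I would first dispose of the main assertion, in which $u$ is assumed sign-definite. If $u(t,\cdot)\geq0$ for all $t\in[0,T)$, then $|u|=u$ pointwise and therefore $\|u(t,\cdot)\|_{L^1(\s)}=\int_\s u(t,x)\,dx={\cal H}_1(t)$, which is constant by \eqref{3.0.6}. Symmetrically, if $u(t,\cdot)\leq0$, then $|u|=-u$ and $\|u(t,\cdot)\|_{L^1(\s)}=-{\cal H}_1(t)$, again constant in $t$. In either case $\|u(t,\cdot)\|_{L^1(\s)}=\|u_0\|_{L^1(\s)}$, which is the claim. For the concluding ``in particular'' clause I would simply invoke Corollary \ref{cor3.1}: when $m_0$ does not change sign, $u$ inherits that sign for all $t\in[0,T)$, so $u$ is non-negative or non-positive and the main assertion applies verbatim.

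There is essentially no analytic obstacle here; the difficulty was already absorbed into Theorem \ref{thm3.1}, Corollary \ref{cor3.1}, and the divergence-form conservation law. The only point requiring genuine care is the logical dependence on the parameter restriction: the identity $\|u\|_{L^1(\s)}=\pm\,{\cal H}_1$ is purely algebraic and sign-driven, but the time-invariance of ${\cal H}_1$ truly requires $p=1$ or $b=pc$, which is exactly the regime in which the divergence-form identity preceding \eqref{3.0.6} holds. Outside this range $\int_\s u\,dx$ need not be conserved and the argument breaks down, so I would make sure the hypotheses are invoked explicitly at the step where \eqref{3.0.6} is used.
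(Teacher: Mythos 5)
Your proof is correct and takes essentially the same route as the paper: both reduce the claim to the identity $\|u(t,\cdot)\|_{L^1(\s)}=\pm\,{\cal H}_1(t)$ for sign-definite $u$, use the conservation of ${\cal H}_1$ (valid exactly when $p=1$ or $b=pc$), and rely on Corollary \ref{cor3.1} for the ``in particular'' clause. Your write-up is in fact slightly more explicit than the paper's, which leaves the appeal to Corollary \ref{cor3.1} and the link $\int_\s m\,dx=\int_\s u\,dx$ unstated.
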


\begin{proof}
Assume that $u(t,x)\geq0$. The conserved quantity \eqref{3.0.4} yields ${\cal H}_1(t)=\|u(t,\cdot)\|_{L^1(\s)}$. On the other hand, if $u(t,x)\leq0$, then $-u(t,x)\geq 0$ and $\|u(t,\cdot)\|_{L^1(\s)}=-{\cal H}_1(t)$.
\end{proof}

\begin{corollary}\label{cor3.2}
Let $p=1$ or $b=pc$. If $m_0\in H^1(\s)$ is non-negative or non-positive, then the corresponding solution $m(t,\cdot)$ of the equation \eqref{3.0.6} belongs to $L^1(\s)$, for all $t\in[0,T)$.
\end{corollary}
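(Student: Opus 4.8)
The plan is to combine the sign-preservation furnished by Theorem \ref{thm3.1} with the conservation of momentum \eqref{3.0.6}. Observe first that $m_0\in H^1(\s)$ is equivalent to $u_0=g\ast m_0\in H^3(\s)$, so the hypotheses of Theorem \ref{thm3.1} are met; moreover, under the parameter restriction $p=1$ or $b=pc$ the conserved quantity ${\cal H}_1$ of \eqref{3.0.4}, hence the identity \eqref{3.0.6}, is at our disposal.

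First I would invoke Theorem \ref{thm3.1}: since $m_0$ does not change sign, the momentum $m(t,\cdot)$ retains that same sign for every $t\in[0,T)$. This is the crucial step, as it converts the $L^1$ norm into a bare integral. Indeed, if $m(t,\cdot)\geq0$ then $\|m(t,\cdot)\|_{L^1(\s)}=\int_\s m(t,x)\,dx$, while if $m(t,\cdot)\leq0$ then $\|m(t,\cdot)\|_{L^1(\s)}=-\int_\s m(t,x)\,dx$; in either case $\|m(t,\cdot)\|_{L^1(\s)}=\left|\int_\s m(t,x)\,dx\right|$.

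Next I would apply the conservation of momentum \eqref{3.0.6}, which gives $\int_\s m(t,x)\,dx=\int_\s m_0(x)\,dx$. Combining this with the previous step yields $\|m(t,\cdot)\|_{L^1(\s)}=\left|\int_\s m_0\,dx\right|=\|m_0\|_{L^1(\s)}$, where the last equality again uses that $m_0$ has a fixed sign. Since $\s$ is compact and $H^1(\s)\hookrightarrow L^1(\s)$, the right-hand side is finite; hence $m(t,\cdot)\in L^1(\s)$ for every $t\in[0,T)$, and in fact its $L^1$ norm is constant in time.

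I do not anticipate a genuine obstacle here, as the statement is essentially a packaging of Theorem \ref{thm3.1} together with \eqref{3.0.6}. The only points demanding care are checking that the sign hypothesis propagates at the $m_0\in H^1(\s)$ regularity level (which Theorem \ref{thm3.1} guarantees through the equivalence $m_0\in H^1(\s)\Leftrightarrow u_0\in H^3(\s)$) and confirming that the conservation law \eqref{3.0.6} is in force, which holds precisely under the stated condition $p=1$ or $b=pc$.
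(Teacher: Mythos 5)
Your proposal is correct and follows essentially the same route as the paper: sign preservation of the momentum via Theorem \ref{thm3.1} combined with the conservation identity \eqref{3.0.6}, so that the conserved integral of $m$ equals, up to a sign, its $L^1(\s)$ norm. The extra details you supply (the equivalence $m_0\in H^1(\s)\Leftrightarrow u_0\in H^3(\s)$ and finiteness of $\|m_0\|_{L^1(\s)}$ from the embedding on the compact circle) are implicit in the paper's proof and only strengthen the write-up.
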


\begin{proof}
First we note that \eqref{3.0.6} holds under the conditions in the Corollary. By Theorem \ref{thm3.1} we see that $\sign{(m(t,x))}=\sign{(m_0(x))}$. The result follows noticing that, up eventually a sign, \eqref{3.0.4} is the $L^1(\s)-$norm of the momentum.
\end{proof}

\section{Global solution and the proof of theorems \ref{thm2.1} and \ref{thm2.2}} \label{sec4}

Through this section we assume that $p=1$ and $b=0$ in \eqref{1.0.1}. This fact is assumed henceforth without further mention.

We begin observing that 
$$\int_{\s}u_{xx}dx=0.$$
This fact is enough to guarantee the existence, for each $t\in(0,T)$, the existence of a point $\xi_t-1\in(0,1)$ such that $u_x(t,\xi_t-1)=0$.

\begin{lemma}\label{lem4.1}
If $u_0\in H^3(\s)\cap L^1(\s)$, is such that $m_0\geq 0$ (or $m_0\leq 0$), then there exists a constant $K>0$ such that the solution of \eqref{2.2.1} satisfies $||u_x||_{L^\infty(\s)}\leq K$.
\end{lemma}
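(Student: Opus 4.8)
The plan is to pass to the momentum formulation and exploit that, under the present hypotheses ($p=1$, $b=0$), the $L^1(\s)$-norm of the momentum is conserved. With $p=1$ and $b=0$, equation \eqref{3.0.1} reduces to the transport equation $m_t + cu\,m_x = 0$ for $m = u - u_{xx}$. First I would invoke Theorem \ref{thm3.1}: since $m_0$ does not change sign, the solution $m(t,\cdot)$ keeps the sign of $m_0$ for every $t \in [0,T)$. Combined with the conservation of momentum \eqref{3.0.6} (valid here because $p=1$), this gives, in the case $m_0 \geq 0$,
$$\|m(t,\cdot)\|_{L^1(\s)} = \int_\s m(t,x)\,dx = \int_\s m_0(x)\,dx = \|m_0\|_{L^1(\s)},$$
and analogously up to a sign if $m_0 \leq 0$. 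This is precisely the content of Corollary \ref{cor3.2}.

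Next I would bound $u_x$ through the convolution representation. Since $\p_x \Lambda^{-2} f = (\p_x g)\ast f$ and $m = \Lambda^2 u$, we have $u_x(t,x) = \int_\s \p_x g(x-y)\, m(t,y)\, dy$. The explicit formula \eqref{2.1.2} shows that on $[0,1)$ one has $\p_x g(x) = \sinh(x-1/2)/(2\sinh(1/2))$, whence $|\p_x g(x)| \leq 1/2$ for all $x$, the maximum of $|\sinh(x-1/2)|$ over $[0,1]$ being attained at the endpoints. Therefore
$$|u_x(t,x)| \leq \f{1}{2}\int_\s |m(t,y)|\,dy = \f{1}{2}\|m(t,\cdot)\|_{L^1(\s)} = \f{1}{2}\|m_0\|_{L^1(\s)},$$
uniformly in $t$ and $x$, so the lemma follows with $K = \tfrac{1}{2}\|m_0\|_{L^1(\s)}$.

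There is no serious analytic obstacle here; the argument is essentially the combination of two facts already at our disposal. The point I would emphasize is that the sign condition on $m_0$ is indispensable: it is exactly what converts the conserved \emph{signed} integral $\int_\s m\,dx$ into genuine control of the \emph{unsigned} quantity $\|m(t,\cdot)\|_{L^1(\s)}$. Without a fixed sign, conservation of $\int_\s m\,dx$ carries no information about $\|m(t,\cdot)\|_{L^1(\s)}$, and the uniform $L^\infty$ bound on $u_x$ would not follow from this elementary estimate. The only computation to verify carefully is the uniform bound $\sup_x |\p_x g(x)| = 1/2$, which is immediate from \eqref{2.1.2}.
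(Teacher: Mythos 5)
Your proof is correct, and it departs from the paper's argument after the first shared step. Both proofs begin the same way: Theorem \ref{thm3.1} (sign preservation of $m$) together with the conservation law \eqref{3.0.6} gives $\|m(t,\cdot)\|_{L^1(\s)}=\|m_0\|_{L^1(\s)}$. From there the paper proceeds by an interval-splitting argument: for each $t$ it locates points where $u_x$ vanishes (possible since $\int_\s u_{xx}\,dx=0$), integrates $m=u-u_{xx}$ over subintervals ending at an arbitrary $x$, and uses the sign of $m$ \emph{and} the sign of $u$ (Corollary \ref{cor3.1}, plus the conserved quantity \eqref{3.0.4}) to bound $u_x$ from below and from above separately. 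You instead bound $u_x$ in one stroke through the convolution representation $u_x=(\p_x g)\ast m$ and the kernel estimate $\sup_x|\p_x g(x)|=1/2$ read off from \eqref{2.1.2}, i.e.\ a Young-type inequality $\|u_x\|_{L^\infty(\s)}\le \|\p_x g\|_{L^\infty(\s)}\|m(t,\cdot)\|_{L^1(\s)}$. Your route is shorter, produces the explicit constant $K=\tfrac12\|m_0\|_{L^1(\s)}$, and requires only the sign of $m$ and the conserved momentum $\int_\s m\,dx$, whereas the paper additionally invokes the sign of $u$ and the conservation of $\int_\s u\,dx$; the paper's route, in exchange, uses nothing about the Green's function beyond its existence, relying only on integration by parts over subintervals. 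One cosmetic remark: you cite Corollary \ref{cor3.2} as "precisely" your first step, but that corollary only asserts $m(t,\cdot)\in L^1(\s)$; the equality of norms you actually need is the slightly stronger statement you (correctly) rederive from Theorem \ref{thm3.1} and \eqref{3.0.6}.
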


\begin{proof}
Assume $m_0$ does not change sign and $m_0\geq 0$. Then, by \eqref{3.0.6}
\begin{eqnarray*}
C_1=||m_0||_{L^1(\s)}&=&\int_\s m_0(r)dr=\int_\s m(t,r)dr\nonumber\\
&=&\int_{\xi_{t}-1}^{\xi_t}m(t,r)dr \geq\int_{{\xi_t}-1}^x m(t,r)dr\nonumber\\
&=&\int_{\xi_{t}-1}^x (u-u_{xx})(t,r)dr\nonumber\\
&=&\int_{\xi_{t}-1}^x u(t,r)dr-u_x(t,x)
\end{eqnarray*}
holds for every $x\in[\xi_{t}-1, \xi_t]$. Hence,
\begin{eqnarray*}
-u_x(t,x)\leq C_1-\int_{\xi_{t}-1}^x u(t,r)dr\leq C_1+\int_{\xi_{t}-1}^x
u(t,r)dr\leq C_2
\end{eqnarray*}
for some positive constant $C_2$. Here, we use (\ref{3.0.4}) and Theorem \ref{thm3.1}, which guarantee that $u$ does not change sign, under the assumption that $m_0$ does not change sign. Taking into account the final result, we observe that $u_x$ is bounded from below.

Moreover, the existence of a point $\nu_t\in (0,1)$ such that $u_x(t,\nu_t)=0$, implies for every \mbox{$x\in[\nu_{t}-1, \nu_t]$} that
\begin{eqnarray*}
u_x(t,x)+\int_x^{\nu_t}u(t,r) dr&=&\int_x^{\nu_t}(u-u_{xx})(t,r) dr\\
&\leq&\int_{\nu_{t}-1}^{\nu_t}(u-u_{xx})(t,r) dr=C_1
\end{eqnarray*}
and 
\begin{eqnarray*}
u_x(t,x)\leq C_1-\int_x^{\nu_t}u(t,r) dr= C_1+\int_x^{\nu_t}u(t,r) dr\leq
C_2.
\end{eqnarray*}
Hence, $u_x$ is bounded also from above. Therefore, we can conclude
that $||u_x||_{\infty}$ norm is bounded provided that $m$ does not change sign , i.e. $||u_x||_{\infty}\leq K$. The case $m_0\leq0$ is proved in a similar way and, therefore, is omitted.
\end{proof}

To prove Theorem \ref{thm2.1}, we need the following lemma which ensures the boundedness of Sobolev norm of the solutions of \eqref{1.0.1}:

\begin{lemma}\label{lem4.2}
Let $u_0\in H^3(\s)$ and $u(t,.)$ be the corresponding unique local solution of \eqref{2.2.1}. If Lemma \ref{lem4.1} is verified, then $||u||_{H^3(\s)}\leq  e^{cAt}||u_0||_{H^3(\s)}$, for some constant $A$ depending on $||u_0||_{L^1(\s)}$. 
\end{lemma}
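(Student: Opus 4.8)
The plan is to exploit the fact that, for $p=1$ and $b=0$, the momentum equation \eqref{3.0.1} collapses to the pure transport equation $m_t+cum_x=0$, with $m=u-u_{xx}$, so that the stretching term $bu^{p-1}u_xm$ disappears. Accordingly I would control the solution through the functional
\[
F(t)=\int_\s\big(m^2+m_x^2\big)\,dx=\|m(t,\cdot)\|_{H^1(\s)}^2,
\]
whose square root is equivalent to $\|u(t,\cdot)\|_{H^3(\s)}$, since $u=\Lambda^{-2}m$ and $\Lambda^{-2}$ is an isomorphism from $H^1(\s)$ onto $H^3(\s)$. This is the ``remarkable functional'' announced in the introduction, and reducing the $H^3$ bound to an $H^1$ bound on $m$ is the first step.

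Next I would differentiate $F$ along the flow. Multiplying $m_t=-cum_x$ by $m$ and integrating, one integration by parts on the circle (all boundary terms vanish by periodicity) gives $\f{d}{dt}\int_\s m^2\,dx=c\int_\s u_xm^2\,dx$. Differentiating the transport equation in $x$ yields $m_{tx}=-cu_xm_x-cum_{xx}$; multiplying by $m_x$ and integrating, the genuinely dangerous term $\int_\s um_xm_{xx}\,dx$, which is one derivative too high, is rewritten as $-\tfrac12\int_\s u_xm_x^2\,dx$ by integration by parts, producing the cancellation that leaves $\f{d}{dt}\int_\s m_x^2\,dx=-c\int_\s u_xm_x^2\,dx$. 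Summing,
\[
\f{d}{dt}F(t)=c\int_\s u_x\big(m^2-m_x^2\big)\,dx .
\]
The whole point of the $b=0$ structure is precisely that this time derivative is controlled by $\|u_x\|_{L^\infty(\s)}$ alone, with no surviving uncontrolled higher-order term.

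I would then invoke Lemma \ref{lem4.1}: under its hypotheses ($m_0$ of one sign) one has $\|u_x\|_{L^\infty(\s)}\le K$, hence
\[
\f{d}{dt}F(t)\le c\,\|u_x\|_{L^\infty(\s)}\,F(t)\le cK\,F(t),
\]
and Gr\"onwall's inequality gives $F(t)\le e^{cKt}F(0)$. Taking square roots and using the equivalence $\sqrt{F}\asymp\|u\|_{H^3(\s)}$ yields $\|u(t,\cdot)\|_{H^3(\s)}\le e^{cAt}\|u_0\|_{H^3(\s)}$ with $A$ of the order of $K$. Since the constant $K$ of Lemma \ref{lem4.1} is built from $\|m_0\|_{L^1(\s)}$ together with $\|u\|_{L^1(\s)}$, the latter being conserved and equal to $\|u_0\|_{L^1(\s)}$ by Theorem \ref{thm3.2}, the dependence of $A$ on $\|u_0\|_{L^1(\s)}$ asserted in the statement is recovered.

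The main obstacle I anticipate is the energy identity for $\int_\s m_x^2\,dx$: one must arrange the integration by parts so that the top-order term $\int_\s um_xm_{xx}\,dx$ cancels exactly against part of the lower-order contribution, for otherwise the estimate would lose a derivative and close only in $H^2(\s)$. A secondary and more cosmetic point is the bookkeeping of the equivalence constants between $\sqrt{F}$ and $\|u\|_{H^3(\s)}$, needed to present the final bound in the clean exponential form of the statement; this is harmless for the intended continuation argument, where only the finite-time boundedness of $\|u\|_{H^3(\s)}$ is actually used.
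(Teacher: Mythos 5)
Your proof is correct, but it takes a genuinely different route from the paper's. The paper never passes to the momentum: it derives three differential identities by multiplying \eqref{2.2.1} by $u$, its $x$-derivative by $u_x$, and its second $x$-derivative by $u_{xx}$ (equations \eqref{4.0.1}--\eqref{4.0.3}), assembles them into the weighted functional $I(u)=\int_\s\big(\tfrac{u^2+u_x^2}{4}+\tfrac{u_x^2+u_{xx}^2}{2}+\tfrac{u_{xx}^2+u_{xxx}^2}{2}\big)dx$ of \eqref{4.0.4}, and computes exactly $\tfrac{d}{dt}I(u)=-2c\int_\s u_xu_{xx}^2\,dx-\tfrac{c}{2}\int_\s u_xu_{xxx}^2\,dx$, which is then bounded by $cA\,I(u)$ via Lemma \ref{lem4.1} and closed with Gronwall; incidentally, it is this $I(u)$, not $\|m\|_{H^1(\s)}^2$, that is the ``remarkable functional'' alluded to in the introduction. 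Your version instead exploits that for $p=1$, $b=0$ the momentum equation \eqref{3.0.1} collapses to the pure transport equation $m_t+cum_x=0$, runs a standard $H^1$ energy estimate on $m$, and transfers back through the isomorphism $\Lambda^{-2}:H^1(\s)\rightarrow H^3(\s)$. Both arguments hinge on exactly the same two ingredients --- the uniform bound $\|u_x\|_{L^\infty(\s)}\leq K$ from Lemma \ref{lem4.1}, then Gronwall --- but yours makes the role of $b=0$ structurally transparent (no stretching term $bu_xm$, hence no loss of derivatives) and replaces three integration-by-parts identities with two one-line ones, while the paper's computation stays entirely at the level of $u$ and yields a time derivative with sign-definite weights, which is the form reused in its Discussion section. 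Two caveats, both shared with the paper and neither fatal: the pairing $\int_\s um_xm_{xx}\,dx$ in your estimate (respectively the terms $u_{xxxx}$, $u_{xxxt}$ in \eqref{4.0.3}) is only formal at $H^3$ regularity and strictly requires a mollification or density argument; and both routes actually produce $\|u(t,\cdot)\|_{H^3(\s)}\leq C e^{cAt}\|u_0\|_{H^3(\s)}$ with a multiplicative norm-equivalence constant $C\geq1$ that cannot be absorbed into the exponential near $t=0$, so the literal constant-free form of the statement holds only up to that factor --- which, as you note, is immaterial for the continuation argument in Theorem \ref{thm2.1}.
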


\begin{proof} We proceed similarly as in \cite{silva-freire2020} for establishing an analogous result for the non-periodic case. We give the steps for finding the form of a functional $I(u)$ which will be used to estimate $H^3$-norm of $u(t,x)$:
\begin{itemize}
    \item Multiply \eqref{2.2.1} by $u$ and rewrite to get 
\begin{equation}\label{4.0.1}
\frac{d}{dt}\left(\frac{u^2+u_x^2}{2}\right)+\frac{d}{dx}\left(\frac{c}{3}u^3-cu^2u_{xx}-uu_{xt}\right)+cu(u_x^2)_x=0.
\end{equation}
    \item Differentiate \eqref{2.2.1} with respect to $x$, multiply by $u_x$ and rewrite to get
\begin{equation}\label{4.0.2}
\frac{d}{dt}\left(\frac{u_x^2+u_{xx}^2}{2}\right)-\frac{d}{dx}(u_xu_{xxt}+cuu_xu_{xxx})+\frac{c}{2}u(u_x^2+u_{xx}^2)_x+cu_x^3=0.
\end{equation}
    \item Differentiate \eqref{2.2.1} with respect to $x$ twice, multiply by
$u_{xx}$ and rewrite to get

\begin{eqnarray} \label{4.0.3}
&&\frac{d}{dt}\left(\frac{u_{xx}^2+u_{xxx}^2}{2}\right)-\frac{d}{dx}(u_{xx}u_{xxxt}+cuu_{xx}u_{xxxx}+cu_xu_{xx}u_{xxx})\nonumber\\
&&+\frac{c}{2}u(u_{xx}^2+u_{xxx}^2)_x+3cu_xu_{xx}^2+cu_xu_{xxx}^2=0.
\end{eqnarray}
\end{itemize}

Defining
\bb\label{4.0.4}I(u)=\int_\s\left(\frac{u^2+u_x^2}{4}+\frac{u_x^2+u_{xx}^2}{2}+\frac{u_{xx}^2+u_{xxx}^2}{2}\right)dx,\ee
from \eqref{4.0.1}--\eqref{4.0.3} and Lemma \ref{lem4.1}, we have,

\begin{eqnarray*}
\frac{d}{dt}I(u)&=&-2c\int_\s u_xu_{xx}^2dx-\frac{c}{2}\int_\s
u_xu_{xxx}^2dx\\
&=&c\int_\s (-u_x)\left(2u_{xx}^2+\frac{u_{xxx}^2}{2}\right)dx\\
&\leq&cC_2\int_\s\left(2u_{xx}^2+\frac{u_{xxx}^2}{2}\right)dx\\
&\leq&cA I(u),
\end{eqnarray*}
for some optimal constant $A>0$. By Gronwall's inequality, 
\begin{equation*}
I(u)\leq I_0e^{cAt},
\end{equation*}
for some constant $I_0$ depending on
$||u_0||_{H^3(\s)}$. We conclude the result noticing that $\sqrt{I(u)}$ is a norm equivalent to $\|u\|_{H^3(\s)}$.
\end{proof}

\textbf{Proof of Theorem \ref{thm2.1}}
Global existence of solution of \eqref{2.2.1} is a direct consequence of Lemma \ref{lem4.2} since the solution can be estimated at any finite time, therefore can be extended globally in time.

\section{Blow-up}\label{sec5}

The following lemma, provided that Lemma \ref{lem4.1} holds, will imply in Theorem \ref{thm2.2} that the solution $u(t,.)$ does not blow up in finite time under the given assumptions:

\begin{lemma}\label{lem5.1}
If  $T<\infty$, then $\sup_{t\in[0,T)}||u(t,\cdot)||_{L^\infty(\s)}<\infty$, where $u$ is the solution of \eqref{2.2.1} in $C^0([0,T),H^3(\s))\cap C^1([0,T),H^{2}(\s))$.
\end{lemma}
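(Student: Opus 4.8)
The plan is to obtain a bound on $\|u(t,\cdot)\|_{L^\infty(\s)}$ that is \emph{uniform} in $t$, exploiting the control on $\|u_x\|_{L^\infty(\s)}$ already furnished by Lemma \ref{lem4.1} together with the conserved mean of $u$. Since we are in the regime $p=1$, the functional ${\cal H}_1(t)=\int_\s u\,dx$ in \eqref{3.0.4} is conserved, so
\bb
\int_\s u(t,x)\,dx=\int_\s u_0(x)\,dx=:{\cal H}_1
\ee
for every $t\in[0,T)$. First I would fix $t$ and use the conservation of this mean to produce an anchor point: because $u(t,\cdot)$ is continuous and $1$-periodic with average ${\cal H}_1$ over $\s$, and since $|\s|=1$ forces $\min_\s u(t,\cdot)\le{\cal H}_1\le\max_\s u(t,\cdot)$, the intermediate value theorem yields a point $x_0=x_0(t)\in\s$ with $u(t,x_0)={\cal H}_1$.

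Next I would integrate $u_x$ from this anchor. Writing $u(t,x)=u(t,x_0)+\int_{x_0}^x u_x(t,s)\,ds$ and invoking Lemma \ref{lem4.1}, for every $x\in\s$ one gets
\bb
|u(t,x)|\leq |{\cal H}_1|+\int_\s|u_x(t,s)|\,ds\leq |{\cal H}_1|+\|u_x(t,\cdot)\|_{L^\infty(\s)}\leq |{\cal H}_1|+K,
\ee
where we used that the length of $\s$ equals $1$. Taking the supremum over $x$ and then over $t\in[0,T)$ gives $\sup_{t\in[0,T)}\|u(t,\cdot)\|_{L^\infty(\s)}\leq |{\cal H}_1|+K<\infty$, which is the claim.

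A few remarks on the structure of the argument. The hypotheses of Lemma \ref{lem4.1} ($m_0$ of one sign) are in force here, and they are what guarantee both the bound $\|u_x\|_{L^\infty(\s)}\le K$ and, via Corollary \ref{cor3.1}, that $u$ keeps a fixed sign; the latter lets one read ${\cal H}_1$ as $\pm\|u\|_{L^1(\s)}$, so the same estimate can equivalently be phrased through conservation of the $L^1(\s)$-norm (Theorem \ref{thm3.2}). I do not expect any genuine analytic obstacle: the only point requiring care is the choice of the anchor $x_0(t)$ from the conserved mean, which is precisely what converts an $L^1$/mean bound into a pointwise one. It is also worth emphasizing that the resulting bound is independent of $t$, so the hypothesis $T<\infty$ is not actually used — the estimate holds on the whole of $[0,T)$. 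Finally, I would deliberately \emph{avoid} invoking the $H^3$ estimate of Lemma \ref{lem4.2}, since the role of Lemma \ref{lem5.1} is to feed the $L^\infty$ control of $u$ and $u_x$ into the blow-up criterion of Theorem \ref{thm2.2}, and relying on the $H^3$ bound would render that route circular.
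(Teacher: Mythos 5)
Your proof is correct, and the hypotheses you import (the sign condition on $m_0$, through Lemma \ref{lem4.1}) are exactly the ones the paper itself relies on: the statement of Lemma \ref{lem5.1} is prefaced in the text by ``provided that Lemma \ref{lem4.1} holds''. However, your route is genuinely different from the paper's. The paper argues along characteristics: it solves $\dot{\gamma}(r)=cu(r,\gamma(r))$, $\gamma(\tau)=y$, rewrites \eqref{2.2.1} in the non-local form $u_t+cuu_x=-\frac{3c}{2}\partial_x\Lambda^{-2}(u_x^2)$, so that $\frac{d}{dr}\,u(r,\gamma(r))$ equals this non-local term along the flow, and then estimates $\|\partial_x\Lambda^{-2}(u_x^2)\|_{L^\infty(\s)}\le\|\partial_x\Lambda^{-2}(u_x^2)\|_{H^1(\s)}\le\|u_x^2\|_{L^2(\s)}\le K^2$ via Lemma \ref{lem4.1}; integrating in time gives $|u(\tau,y)|\le\|u_0\|_{L^\infty(\s)}+\frac{3c}{2}K^2\tau$, a bound growing linearly in $\tau$, which is precisely where the hypothesis $T<\infty$ enters. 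You instead exploit the conserved mean ${\cal H}_1=\int_\s u\,dx$ (legitimate here since $p=1$, by \eqref{3.0.4}), anchor $u(t,\cdot)$ at a point where it equals its mean, and integrate $u_x$ over at most one period, obtaining the time-uniform bound $|{\cal H}_1|+K$. Your argument is more elementary (no flow, no non-local form, no Sobolev embedding) and yields a sharper conclusion --- uniformity in $t$, so that $T<\infty$ becomes superfluous, as you note; your remark about avoiding Lemma \ref{lem4.2} also matches the paper, whose proof of Lemma \ref{lem5.1} likewise does not invoke it. What the paper's approach buys in exchange is independence from the conserved quantity ${\cal H}_1$: the characteristics argument needs only the gradient bound and the non-local structure of the equation, so it survives in settings where the mean of $u$ is not conserved (for \eqref{1.0.1} with $b=0$ that conservation law is no longer available once $p\ge2$), whereas your anchor-point device is tied to $p=1$ and to the compactness of $\s$.
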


\begin{proof}
let $y\in\s$ and $\tau\in[0,T)$ be arbitrary. Consider the
characteristic ordinary differential equation with initial condition
for a curve $\gamma:[0,T)\rightarrow \s$ corresponding to the
solution $u$, that is,
\begin{eqnarray*}
&\dot{\gamma}(r)=cu(r,\gamma(r)),\\
&\gamma(\tau)=y.
\end{eqnarray*}
Here $u\in C^0([0,T),H^3(\s))\subseteq C^0([0,T),C^1(\s))$ by Sobolev
embedding theorem. Therefore, $(t,x)\mapsto u(t,x)$ is continuous on
$[0,T)\times \s$ and globally Lipschitz with respect to $x$. By
compactness of $\s$, we get a unique global solution $\gamma\in
C^1([0,T),\s)$ to the characteristic initial value problem given
above. That follows,
\begin{eqnarray*}
\frac{d}{dr}(u(r,\gamma(r)))&=&u_t(r,\gamma(r))+u_x(r,\gamma(r))\dot{\gamma}(r)\\
&=&u_t(r,\gamma(r))+cu_x(r,\gamma(r))u(r,\gamma(r))\\
&=&-\frac{3c}{2}\partial_x\Lambda^{-2}(u_x^2),
\end{eqnarray*}
and
\begin{eqnarray*}
u(\tau,y)=u(\tau,\gamma(\tau))&=&u(0,\gamma(0))-\frac{3c}{2}\int_0^{\tau}(\partial_x\Lambda^{-2}(u_x^2))(r,\gamma(r))dr\\
&=&u_0(\gamma(0))-\frac{3c}{2}\int_0^{\tau}(\partial_x\Lambda^{-2}(u_x^2))(r,\gamma(r))dr.
\end{eqnarray*}

Therefore,
\begin{eqnarray}\label{5.0.1}
|u(\tau,y)|\leq
||u_0||_{\infty}+\frac{3c}{2}\int_0^{\tau}|(\partial_x\Lambda^{-2}(u_x^2))(r,\gamma(r))|dr.
\end{eqnarray}

The following estimation can be done for the integrand by periodicity and Lemma \ref{lem4.1}:
\begin{eqnarray*}
|(\partial_x\Lambda^{-2}(u_x^2))(r,\gamma(r))|&\leq&
||\partial_x\Lambda^{-2}(u_x^2)||_{\infty}\leq
||\partial_x\Lambda^{-2}(u_x^2)||_{H^1(\s)}\\&\leq&
||u_x^2||_{L^2(\s)}\leq ||u_x^2||_{L^\infty(\s)}\leq K^2.
\end{eqnarray*}

Using this in (\ref{5.0.1}) and observing that $y$ and $\tau$ are
arbitrary, the upper bound for $|u(\tau,y)|$ is independent of these
variables, we can conclude that if $T<\infty$, then
$\sup_{t\in[0,T)}||u(t)||_{L^{\infty}(\s)}<\infty$.
\end{proof}
We now prove our main result regarding blow-up.

{\bf Proof of Theorem \ref{thm2.2}}
The theorem can be directly observed from Lemma \ref{lem4.2}. Moreover, recalling the results of Lemma \ref{lem4.1} and Lemma \ref{lem5.1}, one can also observe that there is no wave breaking in finite time since the wave breaks only if $||u_x||_{L^\infty(\s)}$ is unbounded whereas $||u||_{L^\infty(\s)}$ is bounded.
\hfill$\square$

\section{Unique continuation results}\label{sec6}

Given a solution $u\in C^0([0,T),H^s(\s))$, with $s>3/2$, of \eqref{1.0.1}, for each $t\in[0,T)$ fixed, we define
\bb\label{6.0.1}
f_t(x):=\f{3pc-b}{2}u(t,x)^{p-1}u_x(t,x)^2+\f{b}{p+1}u(t,x)^{p+1}
\ee
and
\bb\label{6.0.2}
F_t(x)=\p_x\Lambda^{-2}f_t(x).
\ee

Finally, consider the families
\bb\label{6.0.3}
{\cal F}_1=(f_t)_{t\in[0,T)}\quad\text{and}\quad {\cal F}_2=(F_t)_{t\in[0,T)}.
\ee
In view of our assumptions, ${\cal F}_1$ is a family of continuous and bounded functions, whereas members of ${\cal F}_2$ are smooth and bounded.

Throughout this section, very often we will make use of the following hypothesis (they are conditions in theorems \ref{thm2.3} and \ref{thm2.4}):
\begin{itemize}
\item[{\bf H1}]\label{h1} $p=1$ and $b\in[0,3c]$;
\item[{\bf H2}]\label{h2} $b=pc$.
\end{itemize}

\begin{proposition}\label{prop6.1}
Let $u_0\in H^s(\s)$, $u\in C^0([0,T),H^s(\s))\cap C^1([0,T),H^{s-1}(\s))$ be the corresponding solution of \eqref{1.0.1} subject to $u(0,x)=u_0(x)$, and $f_t(x)$ given by \eqref{6.0.1}.
\begin{enumerate}
\item If $s>3/2$ and {\bf H1} holds, then $f_t(x)\geq0$.
\item If $s>3/2$, {\bf H2} holds and $p$ is odd, then $f_t(x)\geq0$.
\item If $s=3$, {\bf H2} holds, $p$ is even and $m_0(x)$ is either non-negative or non-positive, then $f_t(x)\geq0$ or $f_t(x)\leq0$, respectively. 
\end{enumerate}

Moreover, under the conditions above, $u(t,\cdot)=0\Leftrightarrow f_t(x)=0 \Leftrightarrow \Lambda^{-2}f_t(x)=0$.
\end{proposition}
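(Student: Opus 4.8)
The goal is to prove Proposition \ref{prop6.1}, which has two parts: establishing the sign of $f_t$ under the three parameter regimes, and then proving the triple equivalence $u(t,\cdot)=0 \Leftrightarrow f_t(x)=0 \Leftrightarrow \Lambda^{-2}f_t(x)=0$.

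\textbf{Sign of $f_t$.} Recall from \eqref{6.0.1} that
\[
f_t(x)=\f{3pc-b}{2}u^{p-1}u_x^2+\f{b}{p+1}u^{p+1}.
\]
First I would treat the two cases where positivity is purely algebraic. Under {\bf H1} we have $p=1$, so $u^{p-1}=u^0=1$ and $u^{p+1}=u^2$, giving $f_t=\f{3c-b}{2}u_x^2+\f{b}{2}u^2$; since $b\in[0,3c]$ forces both coefficients $\f{3c-b}{2}$ and $\f{b}{2}$ to be non-negative, we immediately get $f_t\geq0$. Under {\bf H2}, i.e. $b=pc$, the coefficient $3pc-b=3pc-pc=2pc$, so $f_t=pc\,u^{p-1}u_x^2+\f{pc}{p+1}u^{p+1}=pc\big(u^{p-1}u_x^2+\f{1}{p+1}u^{p+1}\big)$. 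When $p$ is odd, $p-1$ is even so $u^{p-1}\geq0$, and $p+1$ is even so $u^{p+1}\geq0$; both terms are non-negative and $f_t\geq0$. This handles parts (1) and (2) by direct inspection.

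\textbf{The even-$p$ case.} Part (3) is where the real work lies, because with $p$ even the powers $u^{p-1}$ and $u^{p+1}$ are both \emph{odd}, so the sign of $f_t$ is not automatic — it must inherit the sign of $u$ itself. This is exactly what Corollary \ref{cor3.1} supplies: the hypotheses $s=3$ and $m_0$ of one sign place us in the regularity class $C^0([0,T),H^3(\s))\cap C^1([0,T),H^2(\s))$ where that corollary applies, yielding $\sign(u)=\sign(m_0)$ throughout $[0,T)$. If $m_0\geq0$ then $u\geq0$, whence $u^{p-1}\geq0$ and $u^{p+1}\geq0$, and with $b=pc$ both coefficients $pc$ and $\f{pc}{p+1}$ positive we conclude $f_t\geq0$; the case $m_0\leq0$ gives $u\leq0$ and $f_t\leq0$ symmetrically. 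I expect the only delicate point to be confirming that the $C^1([0,T),H^2)$ regularity needed by Corollary \ref{cor3.1} is available here, which it is by the $s=3$ assumption; this is why the proposition restricts part (3) to $s=3$ rather than $s>3/2$.

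\textbf{The equivalence.} For the final chain, the forward implication $u(t,\cdot)=0\Rightarrow f_t=0\Rightarrow \Lambda^{-2}f_t=0$ is trivial from the formula for $f_t$ and linearity of $\Lambda^{-2}$. The substantive direction is the reverse. To run it cleanly I would argue contrapositively using the sign results just established: in each case $f_t$ is a sum of non-negative (resp. non-positive) terms that vanishes pointwise only where both summands vanish. Since in every regime the second summand is (a non-negative multiple of) $u^{p+1}$, having $f_t(x)=0$ forces $u^{p+1}(t,x)=0$, hence $u(t,x)=0$, for every $x$; thus $f_t\equiv0\Rightarrow u(t,\cdot)\equiv0$. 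For the step $\Lambda^{-2}f_t=0\Rightarrow f_t=0$, I would apply the invertible Helmholtz operator $\Lambda^2$ to both sides, giving $f_t=\Lambda^2(0)=0$ directly; alternatively, since $f_t$ does not change sign and $\Lambda^{-2}f_t=g\ast f_t$ with the kernel $g$ from \eqref{g} strictly positive, a vanishing convolution against a strictly positive kernel of a one-signed function forces $f_t\equiv0$. The main obstacle is purely bookkeeping — ensuring the one-signedness of $f_t$ holds in all three regimes so that no cancellation can make $f_t$ or its convolution vanish without $u$ vanishing — and this is precisely what the first part of the proposition was arranged to guarantee.
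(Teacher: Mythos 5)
Your proof follows essentially the same route as the paper's: parts (1) and (2) by direct inspection of the coefficients, part (3) by transferring the sign of $m_0$ to $u$ (the paper does this implicitly; you correctly make the appeal to Corollary \ref{cor3.1} explicit, and correctly identify why the hypothesis $s=3$ appears there), and the equivalence via the strictly positive kernel $g$ of $\Lambda^{-2}$.

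One caveat, which your wording actually exposes more clearly than the paper does: in the step $f_t=0\Rightarrow u(t,\cdot)=0$ you argue that the second summand of $f_t$ is a \emph{non-negative} multiple of $u^{p+1}$, so that $f_t(x)=0$ forces $u^{p+1}(t,x)=0$. When $b=0$, which is allowed under {\bf H1}, that multiple is zero and the argument yields nothing: there $f_t=\frac{3c}{2}u_x^2$, so $f_t\equiv0$ only gives $u_x(t,\cdot)\equiv0$, i.e.\ $u(t,\cdot)$ constant in $x$, and a non-zero constant is a genuine solution of \eqref{2.2.1} with $f_t\equiv0$. Hence the claimed equivalence $f_t=0\Leftrightarrow u(t,\cdot)=0$ really needs $b>0$ (under {\bf H2} this is automatic, since $b=pc>0$). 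To be fair, the paper's proof has exactly the same gap --- it simply asserts that ``in any circumstance $f_t(\cdot)=0$ if and only if $u(t,\cdot)=0$'' --- so this is a defect of the statement as much as of either proof; in the downstream applications (theorems \ref{thm2.3} and \ref{thm2.4}) it is harmless, because there $u$ is additionally assumed to vanish on an interval, which rules out non-zero constants. Apart from this shared edge case, your argument is complete and, if anything, more carefully justified than the original.
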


\begin{proof}
If $s>3/2$ and {\bf H1} holds, then 
$$f_t(x)=\f{3c-b}{2}u_x(t,x)^2+\f{b}{2}u(t,x)^2$$
and the result is immediate.

Let us now assume that {\bf H2} holds. The quantities $u^{p-1}u_x^2$ and $u^{p+1}$ are non-negative as long as $p$ is odd, whereas for an even $p$, $f_t$ will be non-negative or non-positive provided that $u$ is non-negative or non-positive, respectively. In any circumstance, $f_t(\cdot)=0$ if and only if $u(t,\cdot)=0$. We now observe that $\Lambda^{-2} f_t(x)=g\ast f_t(x)$, where $g$ is given by \eqref{g} and is strictly positive. Therefore, $\Lambda^{-2} f_t(x)=0$ if and only if $f_t(\cdot)=0$.
\end{proof}

\begin{corollary}\label{cor6.1}
Under the conditions in Proposition \ref{prop6.1}, if $u(t^\ast,\cdot)=0$, for some $t^\ast\in[0,T)$, then $u(t,\cdot)=0$ for every $t\in[0,T)$. In particular, the families ${\cal F}_1$ and ${\cal F}_2$, given by \eqref{6.0.3} reduce to the identically vanishing function.
\end{corollary}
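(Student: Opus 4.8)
The plan is to prove that $u$ must vanish at every time once it vanishes at $t^\ast$, and then to read off the statement about $\mathcal{F}_1,\mathcal{F}_2$ directly from Proposition \ref{prop6.1}. The structural backbone I would establish first is that, under {\bf H1} or {\bf H2}, equation \eqref{1.0.1} admits the compact convective form $u_t+cu^pu_x=-F_t$, with $F_t=\p_x\Lambda^{-2}f_t$ as in \eqref{6.0.2}. To obtain this I would apply $\Lambda^2$ to $u_t+cu^pu_x$, expand $\Lambda^2(cu^pu_x)=cu^pu_x-c\p_x^2(u^pu_x)$, and collect terms: the third-derivative contributions $cu^pu_{xxx}$ cancel, leaving $\Lambda^2(u_t+cu^pu_x)=-\p_xf_t$ precisely when the coefficient of the cubic term $u^{p-2}u_x^3$ matches, which happens exactly when $p=1$ or $b=pc$, i.e. under {\bf H1} or {\bf H2}. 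This is the same identity already visible in the special case $p=1$, $b=0$ treated in Lemma \ref{lem5.1}.

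With this in hand I would run an energy estimate on the $H^1(\s)$ norm. Pairing the equation with $u$ in $H^1(\s)$ (equivalently, writing $\tfrac12\tfrac{d}{dt}\|u\|_{H^1(\s)}^2=\int_\s m\,u_t\,dx$ with $m=u-u_{xx}$), substituting $u_t=-cu^pu_x-F_t$, using $\int_\s m F_t\,dx=-\int_\s u_x f_t\,dx$ (self-adjointness of $\Lambda^{\pm2}$ together with $\Lambda^2\p_x\Lambda^{-2}=\p_x$), and integrating by parts with periodicity, every divergence term drops and one is left with an identity of the form $\tfrac{d}{dt}\|u\|_{H^1(\s)}^2=\kappa\int_\s u^{p-1}u_x^3\,dx$ for an explicit constant $\kappa$ depending on $b,c,p$. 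Since $s>3/2$ gives $H^s(\s)\hookrightarrow C^1(\s)$, the factor $\int_\s u^{p-1}u_x^3\,dx$ is bounded by $\|u\|_{L^\infty(\s)}^{p-1}\|u_x\|_{L^\infty(\s)}\|u\|_{H^1(\s)}^2$, so that $\bigl|\tfrac{d}{dt}\|u\|_{H^1(\s)}^2\bigr|\le C(t)\|u\|_{H^1(\s)}^2$ with $C(t)$ bounded on every compact subinterval of $[0,T)$. Because $u(t^\ast,\cdot)\equiv0$ forces $\|u(t^\ast,\cdot)\|_{H^1(\s)}=0$, Gronwall's inequality applied both forward and backward from $t^\ast$ yields $\|u(t,\cdot)\|_{H^1(\s)}\equiv0$, hence $u\equiv0$ on $[0,T)$. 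The final assertion is then immediate: by Proposition \ref{prop6.1}, $u(t,\cdot)\equiv0$ is equivalent to $f_t\equiv0$ and to $\Lambda^{-2}f_t\equiv0$, so both families in \eqref{6.0.3} collapse to the zero function.

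An equivalent and perhaps more transparent route, at least when the momentum is a genuine (continuous) function, uses the transport form \eqref{3.0.1}: $u(t^\ast,\cdot)\equiv0$ gives $m(t^\ast,\cdot)\equiv0$, and along the characteristics $\dot\gamma=cu^p$ the linear ODE $\tfrac{d}{dr}m(r,\gamma)=-bu^{p-1}u_x\,m(r,\gamma)$ integrates to $m(r,\gamma(r,x))=m(t^\ast,x)\exp\bigl(-b\int_{t^\ast}^r u^{p-1}u_x\,d\sigma\bigr)=0$; since $\gamma(r,\cdot)$ is a diffeomorphism by \eqref{3.0.3}, $m\equiv0$ and thus $u=\Lambda^{-2}m\equiv0$ for every $r$. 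I expect the main obstacle to be twofold: carrying out the algebra that produces the reduced equation and the clean energy identity, isolating exactly the condition $p=1$ or $b=pc$ (the matching of the cubic coefficient); and justifying these manipulations at the low regularity $s>3/2$ of cases 1--2 of Proposition \ref{prop6.1}, where $m$ need not be a function. Here the convective form $u_t+cu^pu_x=-F_t$, with its smooth bounded right-hand side $F_t$, is what legitimizes the $H^1$ estimate, and it is also what supplies the backward-in-time direction that a bare forward well-posedness statement would not provide.
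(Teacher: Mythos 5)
Your proposal is correct, but it proves the corollary by a genuinely different mechanism than the paper. The paper's proof is a one-line conservation argument: under the hypotheses of Proposition \ref{prop6.1}, the quantity $\|u(t,\cdot)\|_{X}$ is constant in time, where $X=H^1(\s)$ or $X=L^1(\s)$ (the $L^1$ case coming from the invariant \eqref{3.0.4} together with the sign-definiteness of $u$ via Theorem \ref{thm3.2}); since this norm vanishes at $t^\ast$, it vanishes for all $t$, and no differential inequality is ever written down. You instead replace exact conservation by the energy identity $\f{d}{dt}\|u\|_{H^1(\s)}^2=\kappa\int_\s u^{p-1}u_x^3\,dx$ --- your computation checks out: $\kappa=2pc-b$ under {\bf H1}/{\bf H2}, consistent with the value $p[(p+1)c-b]$ obtained by pairing \eqref{1.0.1} directly with $u$, which vanishes precisely in the conserved case $b=(p+1)c$ --- followed by a two-sided Gronwall estimate from $t^\ast$. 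What your route buys: it uses neither the sign hypotheses of Proposition \ref{prop6.1} nor any exactly conserved quantity, so it is more robust (the identity holds for all parameter values, so your argument in fact proves the conclusion without the restrictions {\bf H1}/{\bf H2}), and it makes the backward-in-time propagation explicit rather than implicit in the constancy of a norm. What it costs: the manipulations behind the energy identity (pairing with $m$, integration by parts, differentiating under the integral) require justification at the low regularity $s>3/2$ of cases 1--2, typically by mollification; you flag this obstacle but do not resolve it, whereas the paper pushes the analogous burden into the statements of its conserved quantities. Your secondary, characteristics-based argument (propagating $m(t^\ast,\cdot)\equiv0$ along $\dot\gamma=cu^p$) is also sound and time-reversible, but, as you note, it needs $m$ to be a genuine function, so it only covers the high-regularity case (e.g.\ case 3, $s=3$); it mirrors the paper's Theorem \ref{thm3.1} rather than the paper's actual proof of this corollary.
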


Note that corollary \ref{cor6.1} holds for any conservative solution, in the sense that if the solution $u$ conserves $\|u(t,\cdot)\|_X$ and $\|u(t_0,\cdot)\|_X=0$, for some $t_0$, then $\|u(t,\cdot)\|_X$ vanishes at all (and this does not necessarily requires local well-posedness of solutions, but only conservation of the norm).

\begin{proof}
Under the conditions above, we observe that $\|u(t,\cdot)\|_{X}$ is constant, where $X=H^1(\s)$ or $X=L^1(\s)$, meaning that for all $t\in[0,T)$, we have $\|u(t,\cdot)\|_{X}=\|u(t^\ast,\cdot)\|_{X}=0$.
\end{proof}

\begin{proposition}\label{prop6.2}
Let $u\in C^0([0,T),H^s(\s))\cap C^1([0,T),H^{s-1}(\s))$ be a solution of \eqref{1.0.1}, and assume that the conditions on $p$, $b$, $c$ and $s$ are as in Theorem \ref{thm2.3}. If there exists numbers $t^\ast$, $x_0$, $x_1$ such that $\{t^\ast\}\times[x_0,x_1]\subseteq(0,T)\times\s$, $f_{t^\ast}\big|_{[x_0,x_1]}\equiv0$ and $F_{t^\ast}(x_0)=F_{t^\ast}(x_1)$, then ${\cal F}_1={\cal F}_2=\{0\}$, where ${\cal F}_1$ and ${\cal F}_2$ are the families \eqref{6.0.3}.
\end{proposition}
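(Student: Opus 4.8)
The plan is to reduce the whole statement to showing that $f_{t^\ast}$ vanishes identically on the entire circle: once this is known, the equivalence in Proposition \ref{prop6.1} gives $u(t^\ast,\cdot)\equiv0$, and Corollary \ref{cor6.1} propagates this to every $t\in[0,T)$, so that $f_t\equiv0$ and $F_t\equiv0$ for all $t$, i.e. ${\cal F}_1={\cal F}_2=\{0\}$. Since the hypotheses here are precisely those of Theorem \ref{thm2.3}, Proposition \ref{prop6.1} applies and $f_{t^\ast}$ has a fixed sign on $\s$; I will carry out the argument for $f_{t^\ast}\geq0$, the case $f_{t^\ast}\leq0$ being entirely symmetric.

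The structural fact I would exploit is the operator identity $\p_x^2\Lambda^{-2}=\Lambda^{-2}-\text{Id}$, an immediate consequence of $\Lambda^2=1-\p_x^2$ together with $\Lambda^2\Lambda^{-2}=\text{Id}$. Differentiating the definition \eqref{6.0.2} and applying this identity yields
\bb\label{6.0.7}
\p_x F_t=\p_x^2\Lambda^{-2}f_t=\Lambda^{-2}f_t-f_t.
\ee
On the interval $[x_0,x_1]$ the hypothesis $f_{t^\ast}\big|_{[x_0,x_1]}\equiv0$ collapses \eqref{6.0.7} to $\p_x F_{t^\ast}=\Lambda^{-2}f_{t^\ast}=g\ast f_{t^\ast}$ there. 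Because $g$ is strictly positive (see \eqref{g}) and $f_{t^\ast}\geq0$, the convolution is non-negative, so $\p_x F_{t^\ast}\geq0$ on $[x_0,x_1]$; in other words $F_{t^\ast}$ is non-decreasing on $[x_0,x_1]$.

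Here is where the remaining hypothesis enters. Since $F_{t^\ast}$ is smooth and non-decreasing on $[x_0,x_1]$ with $F_{t^\ast}(x_0)=F_{t^\ast}(x_1)$, it must be constant on the whole interval, whence $\p_x F_{t^\ast}\equiv0$ and therefore $\Lambda^{-2}f_{t^\ast}(x)=g\ast f_{t^\ast}(x)=0$ for every $x\in[x_0,x_1]$. Fixing one such $x$, the integrand $y\mapsto g(x-y)f_{t^\ast}(y)$ is non-negative with vanishing integral over $\s$; as $g>0$ everywhere, this forces $f_{t^\ast}(y)=0$ for almost every $y$, and by continuity of $f_{t^\ast}$ we obtain $f_{t^\ast}\equiv0$ on all of $\s$. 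Combined with the first paragraph, this completes the argument.

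I expect the genuinely substantive step to be the chain just described: recognizing that one should differentiate $F_{t^\ast}$, use \eqref{6.0.7} to turn the local vanishing of $f_{t^\ast}$ into a \emph{monotonicity} statement for $F_{t^\ast}$, and then combine the equal-endpoint condition with the strict positivity of the Green's function $g$ to promote local information into the global conclusion $f_{t^\ast}\equiv0$. The two points requiring a word of care are the justification of the pointwise differentiation yielding \eqref{6.0.7} — harmless, since members of ${\cal F}_2$ are smooth — and the final implication that a single vanishing value of $g\ast f_{t^\ast}$ forces $f_{t^\ast}\equiv0$, which is exactly the place where $g>0$ is indispensable.
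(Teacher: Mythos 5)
Your proof is correct and takes essentially the same route as the paper: the identity $\p_x^2\Lambda^{-2}=\Lambda^{-2}-1$, the fixed sign of $f_{t^\ast}$ (hence of $\Lambda^{-2}f_{t^\ast}=g\ast f_{t^\ast}$) supplied by Proposition \ref{prop6.1}, the equal-endpoint hypothesis on $F_{t^\ast}$, and Corollary \ref{cor6.1} to propagate $u(t^\ast,\cdot)\equiv0$ to all times. Your monotonicity phrasing is just the paper's Fundamental-Theorem-of-Calculus step $0=F_{t^\ast}(x_1)-F_{t^\ast}(x_0)=\int_{x_0}^{x_1}\Lambda^{-2}f_{t^\ast}(x)\,dx$ in a slightly different guise, and your explicitly spelled-out final step (a single zero of $g\ast f_{t^\ast}$ with $g>0$ forces $f_{t^\ast}\equiv0$ everywhere) is precisely what the paper invokes as a black box through Proposition \ref{prop6.1}.
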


\begin{proof} First of all, we note that under the conditions in Theorem \ref{thm2.3}, the function $u$ is either non-negative or non-positive. From the identity $\p_x^2\Lambda^{-2}=\Lambda^{-2}-1$, for every $x$ we have $F_{t^\ast}'(x)=\p_x F_{t^\ast}(x)=\Lambda^{-2}f_{t^\ast}(x)-f_{t^\ast}(x)$. Since $f_{t^\ast}(x)=0$ for $x\in[x_0,x_1]$, the Fundamental Theorem of Calculus and the hypothesis on $F_{t^\ast}$ and $f_{t^\ast}$ imply
\bb\label{6.0.4} 
0=F_{t^\ast}(x_1)-F_{t^\ast}(x_0)=\int_{x_0}^{x_1}\Lambda^{-2}f_{t^\ast}(x)dx.
\ee
Since $\Lambda^{-2}f_{t^\ast}(x)\geq0$ or $\Lambda^{-2}f_{t^\ast}(x)\leq0$, we are forced to conclude that $\Lambda^{-2}f_{t^\ast}(x)=0$ and again, by Proposition \ref{prop6.1}, we conclude that $f_{t^\ast}(x)=0$ and, as a consequence, $u(t^\ast,\cdot)=0$. The result follows from Corollary \ref{cor6.1}.
\end{proof}

{\bf Proof of Theorem \ref{thm2.3}.}
Assume that for some $t^\ast\in(0,T)$,  $u(t^\ast,\cdot)\big|_{[a,b]}\equiv0$, where $0<a<b<1$, then $f_{t^\ast}\big|_{[a,b]}\equiv0$. We observe that we can rewrite \eqref{1.0.1} in the non-local form
\bb\label{6.0.5} 
u_t+cu^pu_x=-\p_x\Lambda^{-2}\Big(\f{3pc-b}{2}u^{p-1}u_x^2+\f{b}{p+1}u^{p+1}\Big)-\f{(p-1)(b-pc)}{2}\Lambda^{-2}\big(u^{p-2}u_x^3\big).
\ee
Therefore, from \eqref{6.0.2}, \eqref{6.0.1} and \eqref{6.0.5}; and taking into account the restrictions on the parameters, we have
\bb\label{6.0.6} 
F_t(x)=-\Big(u_t+cu^pu_x\Big)(t,x).
\ee
Since $u_t(t^\ast,a)=u_t(t^\ast,b)=0$ and $u(t^\ast,x)=0,\,\,x\in[a,b]$, then $f_{t^\ast}(x)=0$, $a\leq x\leq b$ and $F_{t^\ast}(a)=F_{t^\ast}(b)=0$. By Proposition \ref{prop6.2} we conclude that $u(t,x)=0$, for every $(t,x)\in[0,T)\times\s$.
\hfill$\square$

We now give two different demonstrations for theorem \ref{thm2.4}: the first one uses the invariance of the $L^1(\R)$ norm of the solutions. The second one is based on the continuity of the data-to-solution map.

{\bf Proof of Theorem \ref{thm2.4}.}
\begin{itemize}
    \item {\bf Conserved quantity approach.} Under the conditions in the theorem, we can find $\{t^\ast\}\times[a,b]\subseteq{\cal S}$ such that $u(t^\ast,x)=0$, $a\leq x\leq b$ and $u_t(t^\ast,a)=u_t(t^\ast,b)=0$. Equation \eqref{6.0.6} implies that $F_{t^\ast}(a)=F_{t^\ast}(b)=0$, whereas equation \eqref{6.0.4} and Proposition \ref{prop6.2} tell us that $f_{t^\ast}(x)=0$, and then, $u\equiv0$.
    \item {\bf Local well-posedness approach.} By \eqref{6.0.4}, \eqref{6.0.6} and the fact that $f_{t^\ast}(\cdot)$ is either non-negative or non-positive, we conclude that $f_{t^\ast}(x)=0$, for all $x$. This implies that $u(t^\ast,\cdot)=0$. Since \eqref{1.0.1} is invariant under translations, we conclude that $v(t,x):=u(t+t^\ast,x)$ is also a solution of \eqref{1.0.1} subject to $v(0,x)=u(t^\ast,x)=0$, $x\in\mathbb{S}$. The data-to-solution continuity implies that $v\equiv0$ and $u\equiv0$ as well.
\end{itemize}

\hfill$\square$
 
We have a very simple, but beautiful, application of the results above. We recall that $u(t,x)$, $x\in\s$, is said to be compactly supported if $$\supp(u\big|_\s)=\overline{\{(t,x)\in[0,T)\times\s,\,\,u(t,x)\neq0\}}$$
is compact.

\begin{theorem}\label{thm6.1}
Let $u\in C^0([0,T),H^s(\s))$, $s>3/2$, be a solution of \eqref{1.0.1}. If $u\not\equiv0$, then it cannot be compactly supported.
\end{theorem}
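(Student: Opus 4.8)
The plan is to argue by contraposition: I will assume that $u$ is compactly supported in $[0,T)\times\s$ and conclude that $u\equiv0$, which is precisely the contrapositive of the stated claim. The strategy is to extract from compact support a nontrivial rectangle on which $u$ vanishes, together with the vanishing of the time-derivative on its spatial boundary, and then feed this into the unique continuation machinery already developed, namely Theorem \ref{thm2.4} (or, at the level of the families $\mathcal F_1,\mathcal F_2$, Proposition \ref{prop6.2} via the identity \eqref{6.0.6}).

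First I would fix a time slice. Because $\supp(u)$ is compact and $\s$ is itself compact, for each fixed $t$ the section $\{x\in\s:u(t,x)\neq0\}$ is (relatively) contained in a proper closed subset of $\s$ unless $u(t,\cdot)$ is nowhere zero on any interval; the key observation is that compactness of the support in the $(t,x)$-cylinder forces the existence of a genuine open sub-rectangle ${\cal S}=(t_0,t_1)\times(x_0,x_1)$ on which $u$ vanishes identically. Concretely, since the support is compact it is bounded away from part of the cylinder, so its complement contains such a box. Once I have a box where $u\big|_{\cal S}\equiv0$, I would invoke Theorem \ref{thm2.4} directly if the hypotheses (non-negativity or non-positivity of $u$, together with {\bf H1} or {\bf H2}) are in force; more robustly, since here we only assume $s>3/2$ and make no sign or parameter restriction, I would instead argue from scratch: on ${\cal S}$ we have $u\equiv0$, hence $u_t\equiv0$ there, so at an interior time $t^\ast$ we get $u(t^\ast,\cdot)\big|_{[x_0,x_1]}\equiv0$ and $u_t(t^\ast,x_0)=u_t(t^\ast,x_1)=0$, which are exactly the hypotheses of Theorem \ref{thm2.3}.

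The substance then reduces to the relation \eqref{6.0.6}, $F_t(x)=-(u_t+cu^pu_x)(t,x)$, which holds under the parameter restrictions, giving $F_{t^\ast}(x_0)=F_{t^\ast}(x_1)=0$; combined with $f_{t^\ast}\big|_{[x_0,x_1]}\equiv0$ and the sign-definiteness of $\Lambda^{-2}f_{t^\ast}$ from Proposition \ref{prop6.1}, equation \eqref{6.0.4} forces $\Lambda^{-2}f_{t^\ast}\equiv0$, whence $f_{t^\ast}\equiv0$ and $u(t^\ast,\cdot)\equiv0$. Corollary \ref{cor6.1} then propagates this to all $t\in[0,T)$, yielding $u\equiv0$ and contradicting $u\not\equiv0$. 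I would present this as establishing the contrapositive and therefore the theorem.

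The main obstacle I anticipate is the first step: producing the vanishing rectangle cleanly and justifying that compact support genuinely yields an open box on which $u$ is identically zero, rather than merely a set of points where $u=0$. One must be careful that the support being compact in $[0,T)\times\s$ does not automatically give a time-interval of positive length on which an $x$-interval is free of support, since $\s$ is compact and a solution could a priori be nonzero on all of $\s$ at every time. I expect the correct reading is that the theorem intends compact support to mean support strictly inside the cylinder in a way that leaves an open region untouched; the delicate point is to state precisely what compact support buys us and to reconcile it with the periodic geometry, after which the reduction to Theorem \ref{thm2.3}/\ref{thm2.4} is routine.
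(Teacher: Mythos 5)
Your overall strategy coincides with the paper's own proof: assume compact support, produce an open box ${\cal S}=(t_0,t_1)\times(x_0,x_1)$ in the complement of $\supp(u\big|_\s)$ on which $u$ vanishes, and conclude via Theorem \ref{thm2.4} (the paper phrases this as a contradiction argument, splitting superficially into the cases where the complement of the support is connected or not, but the substance is identical). Your alternative routing through Theorem \ref{thm2.3} — using that $u\equiv0$ on the box forces $u_t\equiv0$ there, hence $u_t(t^\ast,x_0)=u_t(t^\ast,x_1)=0$ — is also sound and is essentially the ``conserved quantity approach'' already contained in the paper's proof of Theorem \ref{thm2.4}.

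The step you flag as the main obstacle is a genuine gap in your write-up, but it has a short resolution you should state rather than leave as a hope about ``the correct reading''. The support is closed by definition, so its complement in $[0,T)\times\s$ is open; the only issue is whether it is nonempty, and here compactness does all the work because the \emph{time} factor is the non-compact direction. A compact subset of $[0,T)\times\s$ cannot accumulate at the open end $t=T$ (nor be unbounded in $t$ when $T=\infty$), so it is contained in $[0,T']\times\s$ for some $T'<T$; equivalently, its projection onto the time axis is compact, hence cannot be all of $[0,T)$. Consequently $u$ vanishes identically on the entire strip $(T',T)\times\s$, which contains boxes of the required form — in fact, having $u(t^\ast,\cdot)\equiv0$ for some $t^\ast\in(T',T)$, one may finish directly with Corollary \ref{cor6.1}. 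This disposes of your worry that a compactly supported solution might be nonzero somewhere on $\s$ at every time: that scenario is exactly what compactness forbids. One caveat you raise is real but unavoidable: Theorem \ref{thm6.1} as stated carries no sign or parameter restrictions, yet every available route (your reduction to Theorem \ref{thm2.3}, the reduction to Theorem \ref{thm2.4}, and the paper's own proof) requires {\bf H1} or {\bf H2} together with the corresponding sign hypotheses; the statement must be read as implicitly inheriting the hypotheses of Theorem \ref{thm2.4}, and no proof strategy can remove that dependence.
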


\begin{proof}
Let us prove our result by contradiction. Suppose that $u$ is compactly supported. Let us firstly assume that $([0,T)\times\s)\setminus\supp(u\big|_\s)$ is connected. Therefore we can find an open set ${\cal S}$ contained on it. The result follows from Theorem \ref{thm2.4}.

In case $[0,T)\times\s\setminus\supp(u\big|_\s)$ is not connected, it then has some connected component, from which we can find a subset ${\cal S}$ such that the conditions in Theorem \ref{thm2.4} hold and the result is again a consequence of that theorem. 
\end{proof}

\section{Discussion}\label{sec7}

In \cite{himonas-jmp-2014} it was proved a global existence result for \eqref{1.0.1} under certain conditions on the parameters ($b=p+1$, in our notation) and the authors pointed out that the same problem for other values is an open question. Our Theorem \ref{thm2.1} covers a case not treated in the mentioned reference, since we consider the situation $b=0$ and $p=1$. In this sense, it extends to \eqref{2.2.1} the results established in \cite[Theorem 4.1]{himonas-jmp-2014}. Also, the way we proved it is completely different from the approach used in \cite{himonas-jmp-2014}. Therefore, not only we extended the result by Himonas and Thompson for a case not covered before, but the way we extended it is also different when compared with that used in the aforementioned work. In fact, our proof uses the identities \eqref{4.0.1}--\eqref{4.0.3} to estimate the functional \eqref{4.0.4}, which is equivalent to $\|\cdot\|_{H^3(\s)}$. Our approach is very similar to an analogous result for the non-periodic case firstly established in \cite{silva-freire2020}.

In line with our contribution mentioned above, we delineated conditions for preventing the manifestation of blow-up of the periodic solutions of \eqref{1.0.1} as a wave breaking in our Theorem \ref{thm2.2}, which is in line with similar results for equations of the Camassa-Holm type, see \cite[Theorem 3.4]{wei}.

We note that our theorems \ref{thm2.1} and \ref{thm2.2} fill some gaps in the qualitative analysis of solutions of the $0-$equation and, in particular, not only it partially answers some questions pointed out in \cite{himonas-jmp-2014}, but also extends several results mentioned in our Section \ref{sec2} to the $0-$equation. As pointed out by Zhou (see \cite[page 372]{zhou}), the description of necessary and sufficient conditions for the blow-up of the solutions of the $b-$equation is a difficult problem to be considered (and \eqref{2.2.1} in particular). Although Zhou's comments are concerned with the non-periodic case, it is somewhat expected that the periodic case inherits the same issues.

Finally, we also explored the problem of unique continuation of solutions of \eqref{1.0.1} in theorems \ref{thm2.3} and \ref{thm2.4}. Two key ingredient for dealing with it is the construction of the families \eqref{6.0.1} and \eqref{6.0.2}, and the fact that \eqref{6.0.1} is a non-negative/positive function, depending on the value of the parameters.

We give two demonstration for unique continuation of the solutions of \eqref{1.0.1} belonging to $C^0([0,T);H^s(\s))$, for $s>3/2$, under restriction on the parameters of the equation and $s$. The first demonstration is based on the invariance of a conserved quantity, namely, the $L^1(\s)-$norm. In fact, by proposition \ref{prop6.2} we can find a value of $t^\ast$ for which the solution vanishes identically. Due to this fact and the conservation of the $L^1(\s)-$norm of the solutions of \eqref{1.0.1} (see Theorem \ref{thm3.2}) we use the conserved quantities \eqref{3.0.4} and \eqref{3.0.5} to show that the solution vanishes for all time. This is based on the ideas introduced in \cite{igor-jpa,freire-cor} for investigation of unique continuation results through the use of invariants conserved along time.

The second proof uses the continuity of the data-to-solution map to show that the only solution of \eqref{1.0.1}, under the conditions in theorem \ref{thm2.4}, is zero. Such demonstration is based on the paper \cite{linares}. We note, however, that the conditions on the derivatives of the function $u$ were firstly noted in \cite{freire-cor} (which corrects \cite{igor-jpa}). In particular, any solution vanishing on empty sets satisfies satisfies the conditions required on $u$ in theorem \ref{thm2.3}, but the converse does not necessarily hold.

We observe that a crucial point in our approach is the existence of a norm of the solutions conserved along time. Under the conditions on the parameters, and the order $s$, in Theorem \ref{thm2.3}, we can guarantee the conservation of the $H^1(\s)$ or $L^1(\s)$ norms of the solutions and, therefore, if we can find a point $t^\ast\in(0,T)$ for which one of these norms vanishes, then it vanishes at all. Finally, we note that these ideas are essentially geometric, see \cite{igor-jpa} for a better discussion.

\section{Conclusion}\label{sec8}

In this work we found conditions for \eqref{1.0.1} to have global solutions and to avoid the breaking wave of solutions. We also proved some unique continuation properties of the periodic solutions of \eqref{1.0.1}. All of these results are proved under certain constraints on the parameters of the equation. Moreover, what is reported in the present work shed light to some open questions pointed out in \cite{himonas-jmp-2014}.

\section*{Acknowledgements}

I. L. Freire is grateful to FAPESP for financial support (grant nº 2020/02055-0).

\end{document}